\theoremstyle{plain}
\newtheorem{thm}{Theorem}[section]
\newtheorem{lem}[thm]{Lemma}
\newtheorem{prop}[thm]{Proposition}
\newtheorem{cor}[thm]{Corollary}
\theoremstyle{definition}
\newcommand{\Z}{\mathbb{Z}}
\newcommand{\Q}{\mathbb{Q}}
\begin{document}
\baselineskip 6.1mm

\title
{Primes of higher degree and Annihilators of Class groups}
\author{Nimish Kumar Mahapatra, Prem Prakash Pandey, Mahesh Kumar Ram}

\address[Nimish Kumar Mahapatra]{Indian Institute of Science Education and Research, Berhampur, India.}
\email{nimishkm18@iiserbpr.ac.in}

\address[Prem Prakash Pandey]{Indian Institute of Science Education and Research, Berhampur, India.}
\email{premp@iiserbpr.ac.in}

\address[Mahesh Kumar Ram]{Indian Institute of Science Education and Research, Berhampur, India.}
\email{maheshk@iiserbpr.ac.in}

\subjclass[2020]{11R29, 11R44;}

\date{\today}

\keywords{Residue degree, class groups, annihilators}

\begin{abstract}
Let $L/K$ be a Galois extension of number fields with Galois group $G$. We discuss a new method to obtain elements in $\Z[G]$ which annihilate the class group of $L$. Using this method, we obtain annihilators of class groups of cyclotomic fields. We show that these annihilators are new.  Some more consequences are also discussed. Moreover, we mention some results and connections to highlight importance of primes of higher residue degree.
\end{abstract}

\maketitle{}

\section{Introduction}
Let $L/K$ be a Galois extension of number fields with $Gal(L/K)=G$.  The group ring $\mathbb{Z}[G]$ acts on the set of non-zero  ideals of $L$ (integral or fractional). For any ideal $\mathfrak{a}$ of $L$ and an element $\theta =\sum_{\sigma \in G} n_{\sigma} \sigma \in \mathbb{Z}[G]$ the action is defined by 
$$\theta (\mathfrak{a})=\prod_{\sigma \in G} \sigma(\mathfrak{a})^{n_{\sigma}}.$$ 

Elements $\theta \in \mathbb{Z}[G]$ for which $\theta(\mathfrak{a})$ is a principal ideal for all non-zero ideals $\mathfrak{a}$ of $L$ are called relative annihilators of the class group of $L$ for the extension $L/K$.  If $K=\Q$, then we simply call them annihilators of the class group of $L$. Annihilator of class groups of cyclotomic fields are very important (see their use in resolution of Catalan's conjecture \cite{YFB04, BBM14, RS08}). There is a vast literature on the study of annihilators of class groups \cite{FT88,LW91, JT81, JSBT18, CGRC21}. When $K$ is of class number one then the $G-$trace $N=\sum_{\sigma \in G} \sigma$ in $\mathbb{Z}[G]$ is an annihilator of class group of $L$ for the the extension $L/K$. 

For the cyclotomic field $\Q(\zeta_n)$, the class number will be denoted by $h_n$. We will use $h_n^{+}$ to denote the class number of the maximal real subfield $\Q(\zeta_n^{+})$. For any prime ideal $\wp $ of $L$ and a prime $\mathfrak{p}$ of $K$ below $\wp$ we denote the degree $[\mathcal{O}_L/\wp: \mathcal{O}_K/\mathfrak{p}]$ by $Res_{L/K}\wp$. When there is no confusion, we should simply refer this by the residue degree of $\wp$.  All the prime ideals appearing in this article are unramified. 

For any odd prime number $\ell$,  Kummer proved that the class group of $\mathbb{Q}(\zeta_{\ell})$ is generated by prime ideals of residue degree one (see Chapter 8 of \cite{RS08}).  As a consequence of class field theory, much stronger result is known.  We recall the following result [Theorem 4.6 , \cite{GJ96}].
\begin{thm}\label{CFT1}
Every ideal class in the class group of $L$ contains infinitely many prime ideals $\mathfrak{p}$ of residue degree one
\end{thm}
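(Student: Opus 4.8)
The plan is to deduce this from the Chebotarev density theorem together with Artin reciprocity for the Hilbert class field. First I would let $H$ denote the Hilbert class field of $L$, i.e. the maximal abelian extension of $L$ unramified at every place. Artin reciprocity then furnishes a canonical isomorphism $\mathrm{Cl}(L) \xrightarrow{\sim} \mathrm{Gal}(H/L)$ under which the class of a prime $\mathfrak{p}$ of $L$ corresponds to its Frobenius automorphism $\mathrm{Frob}_{\mathfrak{p}} = \left(\frac{H/L}{\mathfrak{p}}\right)$. Thus fixing an ideal class $c \in \mathrm{Cl}(L)$ amounts to fixing a single element $\sigma_c \in \mathrm{Gal}(H/L)$, and a prime $\mathfrak{p}$ satisfies $[\mathfrak{p}] = c$ precisely when $\mathrm{Frob}_{\mathfrak{p}} = \sigma_c$.

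Next I would apply the Chebotarev density theorem to the extension $H/L$. Since $H/L$ is abelian, the conjugacy class of $\sigma_c$ in $\mathrm{Gal}(H/L)$ is the singleton $\{\sigma_c\}$, so Chebotarev gives that the set of primes $\mathfrak{p}$ of $L$ with $\mathrm{Frob}_{\mathfrak{p}} = \sigma_c$ has density $1/[H:L] = 1/h_L > 0$. In particular this set is infinite, and by the identification above every prime in it lies in the class $c$.

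The remaining point is to pass from \emph{infinitely many primes of $L$ in the class $c$} to \emph{infinitely many such primes of residue degree one over $\Q$}. Here I would invoke the elementary fact that, among all primes of $L$, those of residue degree at least two over $\Q$ have density zero: such a prime $\mathfrak{p}$ has $N\mathfrak{p} = p^{f}$ with $f \ge 2$, hence lies over a rational prime $p \le X^{1/2}$, so the number of them with $N\mathfrak{p} \le X$ is $O([L:\Q]\,\pi(X^{1/2})) = O(X^{1/2})$, negligible against the $\sim X/\log X$ degree-one primes. Removing this density-zero set from the set of density $1/h_L$ produced by Chebotarev leaves a set of density $1/h_L > 0$, which is therefore infinite and consists of degree-one primes in the class $c$. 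This is exactly the assertion.

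The only genuinely hard ingredient is the Chebotarev density theorem (equivalently, the analytic nonvanishing underlying it); once $\mathrm{Cl}(L)$ is identified with $\mathrm{Gal}(H/L)$ via Artin reciprocity, the rest is routine density bookkeeping, the degree-one reduction being entirely elementary. I would expect no obstacle beyond correctly marshalling these two standard pillars of class field theory.
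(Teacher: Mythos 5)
Your proof is correct and follows exactly the route the paper has in mind: the paper gives no proof of its own, citing this as a standard consequence of class field theory (Theorem 4.6 of Janusz), and your argument via the Hilbert class field, Artin reciprocity, Chebotarev, and the elementary density-zero bound on primes of residue degree $\geq 2$ is precisely that standard argument. Note also that residue degree one over $\Q$ implies residue degree one over $K$ by multiplicativity of residue degrees in towers, so your version is (harmlessly) the stronger one.
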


Prime ideal of higher residue degree are not that well explored. In this article, we aim to highlight that their study leads to important results. In this direction, the second author proposed a new idea to construct annihilators of class groups for certain relative extension of number fields \cite{PPP19}. However not a single example was known, until now, where these annihilators were shown to exist. The aim of this article is to provide several class of examples where the idea of \cite{PPP19} succeeds in giving annihilators. Moreover, we also extend these constructions for a wider class of extensions. Also, we shall demonstrate that the annihilators obtained from this method are new. 

Let $C\ell(L)$ denote the class group of $L$. In \cite{PPP19} the second author initiated the study about the set 
\begin{equation*}
\mathbf{R}_{L/K}=\{f \in \mathbb{N}: C\ell(L) \mbox{ is generated by unramified primes of residue degree }f\}.
\end{equation*}

From Theorem \ref{CFT1} it follows that $1 \in \mathbf {R}_{L/K}$.  The case of $\mathbf{R}_{L/K}=\{1\}$ will be referred as trivial case.  In this article, for the first time, we give examples of extensions $L/K$ for which $\mathbf{R}_{L/K}$ is non-trivial. We provide several family of examples. Moreover, we obtain some results to show that the sets $\mathbf{R}_{L/K}$ themselves are important object of study. \\

For any $f \in \mathbf{R}_{L/K}$ we denote all the cyclic subgroups of $G$ of order $f$ by $H_f^1,  H_f^2, \ldots, H_f^r$. Put $H=\bigcap_{i=1}^r H_f^i$. For each $i$, let $\mathscr{S}_i$ denote the collection of all sets $S_i$ such that \\
(i) the set $S_i$ is a complete set of coset representatives of cosets of $H$ in $G$,\\
(ii) the set $S_i$ is product of a complete set of coset representatives of $H_f^i$ in $G$ and a complete set of coset representatives of $H$ in $H_f^i$. \\
Then we put $$\mathscr{S}=\bigcap_{i=1}^r \mathscr{S}_i.$$
If $G$ is cyclic then the set $\mathscr{S}$ is non-empty for each $f$. There are several examples of non-cyclic $G$ and $f$ for which the set $\mathscr{S}$ is non-empty. For example, $G=\{\pm 1, \pm i, \pm j, \pm k\}$ with the multiplication rules $i^2=j^2=k^2=-1, i.j=k, j.k=i, k.i=j, j.i=-k,k.j=-i,i.k=-j$ and $f=4$.\\

For any $S \in \mathscr{S}$,  the set S is a complete set of coset representatives of cosets of $H$ in $G$. Also, for each $i$,  there exist a complete set of coset representatives of $H_f^i$ in $G$, say, $T_i$ and a complete set of coset representatives of $H$ in $H_f^i$, say, $T_i'$ such that
\begin{equation}\label{ann1}
S =T_i T_i'
\end{equation}
For each $S \in \mathscr{S}$, define 
$$\theta_f(S)=\sum_{\sigma_i \in S} \sigma_i.$$

\begin{thm}\label{MT2}
If the class number of $K$ is one then $\theta_f(S)$ annihilates the class group of $L$.
\end{thm}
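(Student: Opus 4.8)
The plan is to exploit the defining property of $f \in \mathbf{R}_{L/K}$. Since $C\ell(L)$ is generated by the classes of unramified primes of residue degree $f$, and since the action of any $\theta \in \Z[G]$ sends principal ideals to principal ideals and hence induces a group endomorphism of $C\ell(L)$, it suffices to prove that $\theta_f(S)(\wp)$ is principal for every unramified prime $\wp$ of $L$ with $Res_{L/K}\wp = f$. Fix such a $\wp$ and let $\mathfrak{p}$ be the prime of $K$ below it. The first key observation is that the decomposition group $D_\wp \subseteq G$ is cyclic of order exactly $f$: because $\wp$ is unramified, the order of $D_\wp$ equals the residue degree $f$, and because the residue extension $\mathcal{O}_L/\wp$ over $\mathcal{O}_K/\mathfrak{p}$ is an extension of finite fields, $D_\wp$ is generated by a Frobenius and hence cyclic. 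Therefore $D_\wp$ occurs in the list $H_f^1,\dots,H_f^r$ of all cyclic subgroups of $G$ of order $f$; say $D_\wp = H_f^i$.

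Next I would invoke the factorization built into the definition of $\mathscr{S}$. Since $S \in \mathscr{S} \subseteq \mathscr{S}_i$, we may write $S = T_i T_i'$ as in \eqref{ann1}, where $T_i$ is a complete set of coset representatives of $H_f^i$ in $G$ and $T_i'$ is a complete set of coset representatives of $H$ in $H_f^i$. Then
\begin{equation*}
\theta_f(S)(\wp) = \prod_{t \in T_i}\prod_{t' \in T_i'} t\bigl(t'(\wp)\bigr).
\end{equation*}
Because every $t' \in T_i' \subseteq H_f^i = D_\wp$ fixes $\wp$, the inner product collapses, and each $t(\wp)$ is contributed exactly $|T_i'| = [H_f^i : H] = f/|H|$ times. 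Writing $m = f/|H|$ (a quantity independent of $i$, as all the $H_f^i$ have order $f$), this gives
\begin{equation*}
\theta_f(S)(\wp) = \Bigl(\prod_{t \in T_i} t(\wp)\Bigr)^{m}.
\end{equation*}

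Now $T_i$, running over the (left) cosets of $D_\wp = H_f^i$ in $G$, parametrizes precisely the distinct primes of $L$ above $\mathfrak{p}$, each occurring to the first power by unramifiedness, so $\prod_{t \in T_i} t(\wp) = \mathfrak{p}\mathcal{O}_L$. Hence $\theta_f(S)(\wp) = (\mathfrak{p}\mathcal{O}_L)^m$. Since the class number of $K$ is one, $\mathfrak{p}$ is principal, and therefore so is $(\mathfrak{p}\mathcal{O}_L)^m$, which completes the argument.

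The step requiring the most care — and the genuine reason for the elaborate definition of $\mathscr{S}$ — is that one cannot predict in advance which subgroup $H_f^i$ arises as the decomposition group of a given generating prime $\wp$. The element $\theta_f(S)$ is fixed once and for all, yet it must yield a principal ideal no matter which index $i$ occurs; this forces $S$ to admit the product decomposition $S = T_i T_i'$ \emph{simultaneously} for every $i$, which is exactly the content of $S \in \mathscr{S} = \bigcap_{i=1}^r \mathscr{S}_i$. Verifying that $\prod_{t\in T_i} t(\wp)$ equals $\mathfrak{p}\mathcal{O}_L$ is the only other point that needs attention, but it is a standard consequence of decomposition theory in unramified Galois extensions.
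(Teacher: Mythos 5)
Your proof is correct and follows essentially the same route as the paper: identify the decomposition group $D_\wp$ with one of the $H_f^i$, use the factorization $S = T_iT_i'$ from the definition of $\mathscr{S}$ to reduce $\theta_f(S)(\wp)$ to a power of $\mathfrak{p}\mathcal{O}_L$, and conclude from the principality of $\mathfrak{p}$ in the class-number-one field $K$. The only cosmetic difference is that you collapse the inner product via $t'(\wp)=\wp$ for $t'\in D_\wp$, whereas the paper first forms $\wp^{N_f^i}=\mathfrak{p}\mathcal{O}_L$ and then applies the elements of $T_i'$ to this extended principal ideal; the two computations are identical.
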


Theorem \ref{MT2} generalises Theorem 1.2 of \cite{PPP19}.  If the extension $L/K$ is cyclic then the annihilators $\theta_f(S)$ are same as the annihilators $\theta_f$ introduced in \cite{PPP19}.  For cyclic extensions, we shall use the notation $\theta_f$ over $\theta_f(S)$. Also, for $f=1$ we use the notation $\theta_1$ instead of $\theta_1(S)$. The main advantage of Theorem \ref{MT2} is that it can be used to get annihilators for relative extensions. In fact, some of the annihilators $\theta_f(S)$ are in the group ring $\Z[G']$ where $G'=Gal(L/F)$ for some intermediate field $F$.  Annihilators $\theta_f(S)$ are almost like the $G-$trace $N=\sum_{\sigma \in G} \sigma$.  However they do not have the obvious drawback of $G-trace$, namely, $\sigma N=N$ for any $\sigma \in G$.  Moreover, these annihilators are new: in Section 4, we show that the annihilators $\theta_f(S)$ may not appear in the Stickelberger ideal for the cyclotomic fields. \\


Our first result, with non-trivial $\mathbf{R}_{L/K}$, is the following theorem.

\begin{thm}\label{MT1}
Let $q \equiv 7 \pmod 8$ be an odd prime different from $7$. Assume that the class number of $\mathbb{Q}(\zeta_q)$ is prime. If $(q-1)/2$ is relatively prime to the class number of $\mathbb{Q}(\sqrt{-q})$ then $\mathbf{R}_{\mathbb{Q}(\zeta_q)/\mathbb{Q}}$ is non-trivial. In fact,  $\mathbf{R}_{\mathbb{Q}(\zeta_q)/\mathbb{Q}}$ contains a divisor of $(q-1)/2$ other than one.
\end{thm}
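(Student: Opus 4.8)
The plan is to produce, quite explicitly, a divisor $f_0 \mid (q-1)/2$ with $f_0 > 1$ such that every prime ideal of $\Q(\zeta_q)$ lying over a suitable rational prime has residue degree $f_0$, and then to exhibit one such prime that is non-principal. Since $h_q$ is assumed prime, $C\ell(\Q(\zeta_q))$ is cyclic of prime order, so a \emph{single} non-principal prime of residue degree $f_0$ already generates the whole class group; this gives $f_0 \in \mathbf{R}_{\Q(\zeta_q)/\Q}$, yielding both the non-triviality and the asserted divisor. The natural candidate is $f_0 = \operatorname{ord}_q(2)$, the residue degree of $2$ in $\Q(\zeta_q)$.

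First I would record the two facts forced by $q \equiv 7 \pmod 8$. Since $q \equiv 3 \pmod 4$, the quadratic subfield of $\Q(\zeta_q)$ is exactly $\Q(\sqrt{-q})$, which is why this field enters the hypotheses. Since $q \equiv -1 \pmod 8$, the prime $2$ is a quadratic residue modulo $q$, hence $2$ splits in $\Q(\sqrt{-q})$, say $2\mathcal{O}_{\Q(\sqrt{-q})} = \mathfrak{p}_2 \overline{\mathfrak{p}}_2$; moreover $2 \in (\Z/q)^{*2}$ forces $f_0 = \operatorname{ord}_q(2)$ to divide $(q-1)/2$, and clearly $f_0 > 1$. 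Because $\mathfrak{p}_2$ has residue degree one over $\Q$ and $2$ is unramified with $G$ abelian, every prime of $\Q(\zeta_q)$ above $\mathfrak{p}_2$ has residue degree $f_0$ over $\Q$. Thus it suffices to show that the ideal $\mathfrak{p}_2 \mathcal{O}_{\Q(\zeta_q)}$ is non-principal, since its class is the product of classes of residue-degree-$f_0$ primes and hence one of those primes must be non-principal.

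The transfer step is the heart of the matter. I would first check that $\mathfrak{p}_2$ is itself non-principal in $\Q(\sqrt{-q})$: using $q \neq 7$ (so $q \geq 23$), the norm form $x^2 + xy + \tfrac{q+1}{4}y^2 = (x+\tfrac{y}{2})^2 + \tfrac{q}{4}y^2$ takes no value equal to $2$ (the value $\tfrac{q}{4} > 2$ bars $y \neq 0$, and $y=0$ gives squares), so no element of norm $2$ exists and $[\mathfrak{p}_2] \neq 0$. To push this up, consider the extension-of-ideals homomorphism $j : C\ell(\Q(\sqrt{-q})) \to C\ell(\Q(\zeta_q))$. Because $N_{\Q(\zeta_q)/\Q(\sqrt{-q})} \circ j$ is multiplication by $(q-1)/2$, the kernel of $j$ is annihilated by $(q-1)/2$; but the hypothesis $\gcd\bigl((q-1)/2,\, h(\Q(\sqrt{-q}))\bigr) = 1$ makes multiplication by $(q-1)/2$ an automorphism of $C\ell(\Q(\sqrt{-q}))$, so $j$ is injective. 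Hence $j([\mathfrak{p}_2]) = [\mathfrak{p}_2\mathcal{O}_{\Q(\zeta_q)}] \neq 0$, which is exactly what was needed, and $f_0 \in \mathbf{R}_{\Q(\zeta_q)/\Q}$ follows as above.

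The main obstacle is this last transfer: without the coprimality hypothesis the extension map could annihilate the class of $\mathfrak{p}_2$ and the construction would collapse. The coprimality of $(q-1)/2$ with $h(\Q(\sqrt{-q}))$ is precisely what forces injectivity of $j$; $q \neq 7$ is exactly what guarantees $\mathfrak{p}_2$ is non-principal to begin with; and $h_q$ prime is what upgrades a single non-principal prime of residue degree $f_0$ to generation of the full class group. I expect the only computation requiring care to be the non-representability of $2$ by the relevant norm form for $q \geq 23$, which is routine.
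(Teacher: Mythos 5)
Your proof is correct, and it is leaner than the paper's. The paper's primary tool is Gica's result (Proposition~\ref{P3} in the paper): it produces an odd prime $p<q$ with $p\equiv 3\pmod 4$ that is a quadratic residue mod $q$, shows via quadratic reciprocity that $p$ splits in $\Q(\sqrt{-q})$, rules out principality of the primes above $p$ by reading $x^2+q=4p$ modulo $8$, and then transfers non-principality up to $\Q(\zeta_q)$ by a norm argument: if $\wp$ were principal then $\mathfrak{p}_1^{f}$ would be principal, which the coprimality of $(q-1)/2$ with $h(\Q(\sqrt{-q}))$ forbids. You bypass Gica's theorem and quadratic reciprocity entirely and work only with the prime $2$, which is automatically split in $\Q(\sqrt{-q})$ and of residue degree dividing $(q-1)/2$ because $q\equiv 7\pmod 8$; notably, the paper's own proof also treats the prime $2$ this way, but only as a secondary step appended after the Gica argument (there the non-solvability of $x^2+qy^2=2$ or $8$ plays the role of your norm-form bound). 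Your transfer step is also packaged differently: instead of the paper's prime-by-prime norm computation, you prove once and for all that the extension map $j:C\ell(\Q(\sqrt{-q}))\to C\ell(\Q(\zeta_q))$ is injective, since $N\circ j$ is multiplication by $(q-1)/2$, which the coprimality hypothesis makes an automorphism of $C\ell(\Q(\sqrt{-q}))$; this is a cleaner and slightly more general formulation of the same mechanism, at the small cost of needing the extra (easy) observation that non-principality of $\mathfrak{p}_2\mathcal{O}_{\Q(\zeta_q)}$ forces some single prime above $2$ to be non-principal. What the paper's longer route buys is a potential second element of $\mathbf{R}_{\Q(\zeta_q)/\Q}$ (the residue degrees $f$ of $p$ and $f'$ of $2$ need not coincide, though they do in the only known example $q=23$); what your route buys is a proof of the stated theorem that is self-contained apart from standard algebraic number theory.
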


It is not easy to find primes $q$ satisfying hypothesis of Theorem \ref{MT1} because of primality condition on $h_q$. In fact, $q=23$ is the only known example. It may be possible that there are no other examples.  To remedy this, we look towards real cyclotomic fields. The class numbers $h_n^+$ are primes more often than the class numbers $h_n$.  In the next theorem we describe some real cyclotomic fields such that $\mathbf{R}_{\Q(\zeta_n^{+})/\Q}$ is non-trivial. 

\begin{thm}\label{MT3}
Let $q$ be an odd prime and $m>1$ be an integer such that $n=(2mq)^2+1$ is square-free. Assume that the class number $h_n^+$ is prime. If $\phi(n)/4$ and the class number of $\Q(\sqrt{n})$ are relatively prime then $\mathbf{R}_{\Q(\zeta_n^{+})/\Q}$ is non-trivial. 
\end{thm}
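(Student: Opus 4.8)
Write $L=\Q(\zeta_n^+)$ and $M=\Q(\sqrt{n})$. Since $n=(2mq)^2+1\equiv 1\pmod 4$ is square-free, $M$ is a real quadratic field contained in $L$, with $[L:M]=\phi(n)/4$. Put $\bar G=Gal(L/\Q)$, let $\bar H=Gal(L/M)$ (an index-two subgroup of order $\phi(n)/4$), and set $\ell=h_n^+$. As $\ell$ is prime, $C\ell(L)\cong \Z/\ell\Z$, so it is generated by any single non-principal prime; hence to prove $f\in\mathbf R_{L/\Q}$ for a given $f$ it suffices to exhibit one non-principal prime of $L$ of residue degree $f$, or---more robustly---to show that every ideal class contains a prime of residue degree $f$.

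The plan is to reduce this to the Galois action on $C\ell(L)$. Let $H_L$ be the Hilbert class field of $L$; since $L/\Q$ is abelian, $H_L/\Q$ is Galois, and writing $\tilde G=Gal(H_L/\Q)$ we have an exact sequence $1\to C\ell(L)\to \tilde G\to \bar G\to 1$ whose conjugation action of $\bar G$ on $C\ell(L)\cong\Z/\ell\Z$ is a character $\psi\colon\bar G\to(\Z/\ell\Z)^\times$. For a prime $\mathfrak Q$ of $H_L$ above a rational prime $p$, if $\sigma=\mathrm{Frob}_{\mathfrak Q}$ then $\bar g:=\sigma|_L$ has order equal to the residue degree $f$ of $\mathfrak P=\mathfrak Q\cap L$, and the class of $\mathfrak P$ equals $\sigma^f\in C\ell(L)$. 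Running over the lifts $\sigma$ of a fixed $\bar g\in\bar G$ of order $f$ and invoking Chebotarev (a refinement of \thmref{CFT1}), one finds $\sigma^f=\sigma_0^f\cdot N_{\bar g}(a)$ with $N_{\bar g}=\sum_{i=0}^{f-1}\bar g^{\,i}$ acting on $C\ell(L)$; since $\psi(\bar g)^f=1$, the operator $N_{\bar g}$ is invertible on $\Z/\ell\Z$ \emph{iff} $\psi(\bar g)=1$ and $\ell\nmid f$. Thus everything reduces to finding $\bar g$ of order $f>1$, with $\ell\nmid f$, acting trivially on $C\ell(L)$.

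To produce such a $\bar g$ I would exploit the quadratic subfield $M$ and the coprimality hypothesis. Let $j\colon C\ell(M)\to C\ell(L)$ be extension of ideals and $N_{L/M}\colon C\ell(L)\to C\ell(M)$ the norm, so $N_{L/M}\circ j$ is multiplication by $[L:M]=\phi(n)/4$ on $C\ell(M)$. Because $\gcd(\phi(n)/4,h_M)=1$ (with $h_M=\#C\ell(M)$), this is an automorphism, so $j$ is injective; hence $h_M\mid\ell$ and $h_M\in\{1,\ell\}$. When $h_M=\ell$, $j$ is an isomorphism, so every ideal class of $L$ is represented by an ideal extended from $M$, and such ideals are fixed by $\bar H=Gal(L/M)$; therefore $\bar H\subseteq\ker\psi$, i.e. all of $Gal(L/M)$ acts trivially on $C\ell(L)$. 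Here also $\gcd(\phi(n)/4,\ell)=1$, so choosing any prime $f\mid\phi(n)/4$ (one exists since $m>1$ forces $\phi(n)/4>1$) and $\bar g\in\bar H$ of order $f$, we get $\ell\nmid f$ and $\psi(\bar g)=1$; by the reduction above every class then contains a prime of residue degree $f$, so $f\in\mathbf R_{L/\Q}$ and $\mathbf R_{L/\Q}$ is non-trivial.

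The main obstacle is the remaining case $h_M=1$, where the extension-of-ideals map carries no information and the norm argument does not force $Gal(L/M)$ to act trivially. To close this I would turn to genus theory for $L/M$: whether $\psi|_{\bar H}$ is trivial is governed by the order of $C\ell(L)^{\bar H}$, which an ambiguous-class (Chevalley-type) formula expresses through the product of ramification indices of the primes dividing $n$ in $L/M$, and I would try to extract the required factor $\ell$ from this ramification. A complementary tool special to this family is the element $\bar x\in\bar G$ coming from $2mq$: since $(2mq)^2\equiv-1\pmod n$, the residue of $2mq$ has order $4$ in $(\Z/n\Z)^\times$ and hence order $2$ in $\bar G$, giving a canonical candidate $\bar g=\bar x$ of order $f=2$ whose triviality on $C\ell(L)$ one could test directly (this case also requires $\ell$ odd). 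I expect the case $h_M=1$, together with the verification that the chosen $\bar g$ genuinely acts trivially, to be the technical heart of the proof.
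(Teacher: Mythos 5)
Your reduction machinery is sound: the Hilbert class field set-up, the character $\psi$, and the computation that primes of residue degree $f$ lying over a fixed $\bar g$ realize exactly the coset $\sigma_0^f\,N_{\bar g}(C\ell(L))$ are all correct, as is the dichotomy $h_M\in\{1,\ell\}$ obtained from injectivity of $j$. In the case $h_M=\ell$ your argument does yield a nontrivial element of $\mathbf{R}_{L/\Q}$ (indeed the stronger statement that every prime divisor $f$ of $\phi(n)/4$ lies in $\mathbf{R}_{L/\Q}$, with every ideal class containing primes of degree $f$). But there is a genuine gap: the case $h_M=1$ is left open, and the tools you propose for it (an ambiguous-class formula for $L/M$, or testing the order-two element coming from $2mq$) are not developed and are not what is needed. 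In fact $h_M=1$ never occurs under the hypotheses, but proving this is precisely the step where the special shape $n=(2mq)^2+1$ with $m>1$ must be used --- and your proposal never uses it beyond the observation that $(2mq)^2\equiv -1\pmod n$. That the defining hypothesis of the theorem goes essentially unused should have been a warning sign.

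The paper closes this case with an explicit non-principal prime of $M$. Since $n\equiv 1\pmod q$, the prime $q$ splits in $M=\Q(\sqrt n)$, say $q\mathcal{O}_M=\mathfrak{q}_1\mathfrak{q}_2$. If $\mathfrak{q}_1$ were principal, taking norms (recall $\mathcal{O}_M=\Z[(1+\sqrt n)/2]$ as $n\equiv 1\pmod 4$) would give integers $x,y$ with $x^2-ny^2=\pm 4q$. But by the Ankeny--Chowla--Hasse result (Proposition \ref{P4}), the equation $x^2-\bigl((2mq)^2+1\bigr)y^2=\pm m'$ has no integer solutions for non-square $m'<2\cdot 2mq$, and $4q<4mq$ exactly because $m>1$. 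Hence $\mathfrak{q}_1$ is non-principal, so $h_M>1$ and your problematic case is vacuous. The paper then finishes more directly than your Chebotarev argument: if $\wp$ is a prime of $L$ above $\mathfrak{q}_1$, of residue degree $f=Res_{L/\Q}\,q$, then $f\mid \phi(n)/4$, and if $\wp$ were principal then $N_{L/M}\wp=\mathfrak{q}_1^{f}$ would be principal, forcing the order of $[\mathfrak{q}_1]$ to divide $\gcd\bigl(\phi(n)/4,\,h_M\bigr)=1$, a contradiction; since $h_n^+$ is prime, this single non-principal prime generates $C\ell(L)$, and $f>1$ because $1<q<n-1$ gives $q\not\equiv\pm 1\pmod n$. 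So your proof can be repaired by inserting the Pell-equation step to kill the case $h_M=1$; once that is done, your class-field-theoretic route through the case $h_M=\ell$ is a valid (and in some ways stronger) alternative to the paper's norm argument.
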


Analogously, we also have the following theorem.
\begin{thm}\label{MT31}
Let $n=[(2m+1)q]^2+4$ be a square-free integer, where $q$ is an odd prime and $m \geq 1$. If $h_{n}^{+}$ is prime and $\phi(n)/4$ is relatively prime to the class number of $\Q(\sqrt{n})$ then $\mathbf{R}_{\Q(\zeta_{n}^{+})/\Q}$ is non-trivial. 
\end{thm}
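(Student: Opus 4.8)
The plan is to produce a single divisor $f>1$ of $\phi(n)/4$ lying in $\mathbf{R}_{\Q(\zeta_n^+)/\Q}$, using the tower $\Q\subset F\subset L$ with $F=\Q(\sqrt n)$ and $L=\Q(\zeta_n^+)$. Since $a=(2m+1)q$ is odd we have $n=a^2+4\equiv 5\pmod 8$, so $n\equiv 1\pmod 4$ and the (square-free) real quadratic field $F$ sits inside $L$ with $[L:F]=\phi(n)/4$ and $\mathcal{O}_F=\Z[\tfrac{1+\sqrt n}{2}]$. Because $h_n^+$ is prime, $C\ell(L)$ is cyclic of prime order; hence it is enough to exhibit one non-principal prime of $L$ of some residue degree $f>1$, for such a prime automatically generates $C\ell(L)$, and then the residue-degree-$f$ primes generate the whole class group, i.e. $f\in\mathbf{R}_{L/\Q}$.

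The prime I would track is $q$ itself. From $n=a^2+4$ with $q\mid a$ we get $n\equiv 4\pmod q$, so $\left(\tfrac nq\right)=1$ and $q$ splits in $F$; fix a prime $\mathfrak{q}$ of $F$ above $q$, of residue degree one over $\Q$. Let $f$ be the order of the Frobenius class of $q$ in $Gal(L/\Q)=(\Z/n\Z)^{\ast}/\{\pm1\}$, equivalently the common residue degree over $\Q$ of the primes $\mathfrak{P}$ of $L$ above $q$. Since $q$ splits in $F$, this Frobenius lies in $Gal(L/F)$, so $f\mid[L:F]=\phi(n)/4$; and since $0<q<n-1$ gives $q\not\equiv\pm1\pmod n$, we have $f>1$.

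Next I would show such a $\mathfrak{P}$ is non-principal by a norm computation. As $\mathfrak{q}$ has residue degree one over $\Q$, the prime $\mathfrak{P}$ has residue degree $f$ over $\mathfrak{q}$, so $N_{L/F}(\mathfrak{P})=\mathfrak{q}^{\,f}$. If $\mathfrak{P}$ were principal then $\mathfrak{q}^{\,f}$ would be principal in $F$, so the order $d$ of $[\mathfrak{q}]$ in $C\ell(F)$ would divide $f$; since also $d\mid h_F$ (the class number of $F$) and $f\mid\phi(n)/4$, the hypothesis $\gcd(\phi(n)/4,h_F)=1$ forces $d=1$, i.e. $\mathfrak{q}$ principal. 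Thus everything reduces to one arithmetic fact: $\mathfrak{q}$ is \emph{not} principal in $F$.

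Establishing this non-principality is the crux, and is where the special shape $n=a^2+4$ is used --- unlike the imaginary-quadratic situation, the indefinite norm form of $F$ turns ``$\mathfrak{q}$ principal'' into a Pell-type question rather than a finite search. I would compute the continued fraction of $\sqrt n$: for $n=a^2+4$ it is purely periodic of period five, with complete-quotient denominators $Q_i$ taking only the values $1,4,a$, so the only values $|x^2-ny^2|<\sqrt n$ represented by the principal form are $1,4,a$. Because $a=(2m+1)q\geq 3q$ we have $q<\sqrt n$ and $q\notin\{1,4,a\}$, so $x^2-ny^2=\pm q$ is insoluble; together with a short square-gap estimate ruling out $x^2-ny^2=\pm 4q$ (examining $a^2+4\pm 4q$ for $|y|=1$ and trapping the $|y|\geq2$ cases between consecutive squares), this shows no element of $\mathcal{O}_F$ has norm $\pm q$, so $\mathfrak{q}$ is non-principal. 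Then $\mathfrak{P}$ is a non-principal prime of residue degree $f>1$ generating the prime-order group $C\ell(L)$, giving $f\in\mathbf{R}_{\Q(\zeta_n^+)/\Q}\setminus\{1\}$. I expect the only delicate point in the write-up to be the boundary case $m=1$, where $a=3q$ and $4q>\sqrt n$ so the continued-fraction bound misses the value $4q$; this case must be closed by the direct elementary estimate.
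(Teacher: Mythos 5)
Your overall route coincides with the paper's: split $q$ in $F=\Q(\sqrt{n})$, reduce non-principality of a prime $\mathfrak{P}$ of $L=\Q(\zeta_n^+)$ above $q$ to non-principality of $\mathfrak{q}$ in $F$ via $N_{L/F}(\mathfrak{P})=\mathfrak{q}^{f}$ together with $f\mid \phi(n)/4$ and the gcd hypothesis, then invoke primality of $h_n^+$ to conclude $f\in\mathbf{R}_{\Q(\zeta_n^+)/\Q}$ (your explicit check that $f>1$, via $q\not\equiv\pm 1\pmod n$, is a detail the paper leaves implicit). The one genuine divergence is the key diophantine input: the paper simply quotes Proposition \ref{P5} (Lang), which gives insolubility of $x^2-ny^2=\pm 4q$ uniformly for all $m\geq 1$, whereas you try to prove it from scratch by continued fractions. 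For $m\geq 2$ your argument is complete: then $4q<a<\sqrt{n}$, any solution is primitive or twice a primitive solution of $x^2-ny^2=\pm q$, and the classical theorem that primitively represented values of absolute value less than $\sqrt{n}$ occur among the $Q_i\in\{1,4,a\}$ rules out both equations (note $4q=a$ is impossible since $a$ is odd).

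For $m=1$, however --- a case squarely inside the theorem's hypotheses, and the one used in several of the paper's own examples ($n=1093$, $n=3253$) --- your proposed patch fails. You suggest handling primitive solutions of $x^2-ny^2=\pm 4q$ with $|y|\geq 2$ by trapping $ny^2\pm 4q$ strictly between consecutive squares, i.e.\ by arguing that no perfect square lies within $4q$ of $ny^2$. That assertion is false. Take $q=3$, $m=1$, so $a=9$, $n=85$, $4q=12$: for $y=5$ one has $46^2=2116$ while $85\cdot 25=2125$, so a square lies at distance $9<12$ from $ny^2$ (this is just the convergent value $46^2-85\cdot 5^2=-a$). Thus the interval $\left(ny^2-4q,\ ny^2+4q\right)$ can contain perfect squares, and no square-gap estimate can close this case. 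What is true is the finer statement that the values attained in that window are only $\pm 1,\pm 4,\pm a$ and never $\pm 4q$; proving this when $4q>\sqrt{n}$ requires either an analysis of semiconvergents or the unit-descent argument of Ankeny--Chowla--Hasse and Lang, which is exactly the content of the paper's Proposition \ref{P5}. Citing that proposition, as the paper does, closes your $m=1$ gap and renders the case split unnecessary.
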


The study of sets $\mathbf{R}_{L/K}$ is very important. Apart from providing annihilators, they are nicely interlinked with class numbers of subfields. Prime ideals of higher residue degree are not well explored and we hope these results will bring the set $\mathbf{R}_{L/K}$ at forefront.  We mention some such results.

\begin{thm}\label{MT4}
Let $n$ be a positive integer such that $(\Z/n\Z)^{*}$ is cyclic. Let $f$ be a divisor of $\phi(n)$ which is relatively prime to $\phi(n)/f$.  We use $K_f$ to denote the subfield of $\Q(\zeta_n)$ such that $[K_f:\Q]=f$. If $f$ is in $\mathbf{R}_{\Q(\zeta_n)/\Q}$ then the $q-$part of the class group of $K_f$ is trivial for all $q$ not dividing $\phi(n)/f$.
\end{thm}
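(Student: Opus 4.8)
The plan is to compare the class group of $K_f$ with that of $L=\Q(\zeta_n)$ via the relative norm map, and to use the hypothesis $f\in\mathbf{R}_{\Q(\zeta_n)/\Q}$ to force this norm map to be trivial. First I would record the group-theoretic picture. Since $G=Gal(L/\Q)$ is cyclic of order $\phi(n)$ and $\gcd(f,\phi(n)/f)=1$, there is a unique subgroup $H$ of order $f$ and a unique subgroup $H'$ of order $\phi(n)/f$, and $G=H\times H'$ with $H\cap H'=\{1\}$. By Galois correspondence $K_f=L^{H'}$, so $Gal(L/K_f)=H'$ and $[L:K_f]=\phi(n)/f$.

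Next I would analyse the primes that generate $C\ell(L)$. Let $\wp$ be an unramified prime of $L$ of residue degree $f$ over $\Q$, lying above the rational prime $p$. Since $L/\Q$ is abelian and unramified at $p$, the decomposition group $D_\wp$ is cyclic of order equal to the residue degree $f$; as $G$ is cyclic this forces $D_\wp=H$. The decomposition group of $\wp$ in the subextension $L/K_f$ is $D_\wp\cap Gal(L/K_f)=H\cap H'=\{1\}$, whence $Res_{L/K_f}\wp=1$. By multiplicativity of residue degrees in the tower $L/K_f/\Q$, the prime $\mathfrak{P}=\wp\cap\mathcal{O}_{K_f}$ satisfies $Res_{K_f/\Q}\mathfrak{P}=f=[K_f:\Q]$; hence $p$ is inert in $K_f$ and $\mathfrak{P}=p\mathcal{O}_{K_f}$ is principal. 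Consequently $N_{L/K_f}(\wp)=\mathfrak{P}^{\,Res_{L/K_f}\wp}=\mathfrak{P}$ is a principal ideal of $K_f$, so the class of $\wp$ lies in the kernel of the relative norm.

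Now I would pass to class groups. The relative norm induces a homomorphism $N\colon C\ell(L)\to C\ell(K_f)$, and the extension-of-ideals map induces $j\colon C\ell(K_f)\to C\ell(L)$, with the standard identity that $N\circ j$ equals multiplication by $[L:K_f]$ on $C\ell(K_f)$. By hypothesis $f\in\mathbf{R}_{\Q(\zeta_n)/\Q}$, so the classes of residue-degree-$f$ primes generate $C\ell(L)$; since each such class lies in $\ker N$ by the previous paragraph and $N$ is a homomorphism, $N$ is identically zero. Substituting into $N\circ j=[L:K_f]=\phi(n)/f$ shows that multiplication by $\phi(n)/f$ annihilates $C\ell(K_f)$. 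Hence the exponent of $C\ell(K_f)$ divides $\phi(n)/f$, and therefore for every prime $q\nmid\phi(n)/f$ the $q$-part of $C\ell(K_f)$ is trivial, as asserted.

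I expect the crux to be the step identifying $N$ as the zero map, which rests on two facts about each residue-degree-$f$ prime $\wp$: that its decomposition group over $K_f$ is trivial (so $Res_{L/K_f}\wp=1$) and that the prime below it in $K_f$ is the principal ideal generated by a rational prime. Both facts depend essentially on the coprimality $\gcd(f,\phi(n)/f)=1$, which is exactly what yields $H\cap H'=\{1\}$; without it $N_{L/K_f}(\wp)$ need not be principal and the argument breaks down. The surrounding algebra — the norm–conorm identity and the deduction of triviality of the $q$-part from annihilation by $\phi(n)/f$ — is routine.
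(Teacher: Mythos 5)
Your proof is correct, but it takes a genuinely different route from the paper's. The paper derives Theorem \ref{MT4} from its annihilator machinery: writing $L=\Q(\zeta_n)$ and $H=Gal(L/K_f)$, it invokes Lemma \ref{L1} and Theorem \ref{MT2} to conclude that $\theta_f(S)=\sum_{\tau\in H}\tau$ annihilates $C\ell(L)$; it then applies this annihilator to a prime $\wp$ of $L$ lying above an \emph{arbitrary} prime $\mathfrak{p}$ of $K_f$ of relative residue degree $t$, obtaining that $\mathfrak{p}^{t}\Z[\zeta_n]$, hence $\mathfrak{p}^{\phi(n)/f}\Z[\zeta_n]$, is principal, and finally takes $N_{L/K_f}$ to conclude that $\mathfrak{p}^{(\phi(n)/f)^2}$ is principal. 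You bypass $\theta_f(S)$, Theorem \ref{MT2} and Lemma \ref{L1} entirely and argue with the norm and conorm maps on class groups: coprimality of $f$ and $\phi(n)/f$ forces every residue-degree-$f$ prime of $L$ to have residue degree one over $K_f$ and to lie over a rational prime inert in $K_f$, so its $K_f$-norm is principal; the generation hypothesis then makes the norm map $N\colon C\ell(L)\to C\ell(K_f)$ identically zero, and the identity $N\circ j=[L:K_f]$ kills $C\ell(K_f)$ by $\phi(n)/f$. The underlying splitting computation is the same in both arguments (your identity $N_{L/K_f}(\wp)=p\mathcal{O}_{K_f}$ is the paper's $\wp^{\theta_f(S)}=p\Z[\zeta_n]$ read on the other side of the conorm, since $\theta_f(S)$ acts on ideals as $j\circ N_{L/K_f}$), but yours is more self-contained and yields a sharper conclusion: the exponent of $C\ell(K_f)$ divides $\phi(n)/f$, whereas the paper's argument only bounds it by $(\phi(n)/f)^2$. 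Both bounds suffice for the stated triviality of the $q$-part for $q\nmid\phi(n)/f$.
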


Analogous result holds for the extension $\Q(\zeta_n^+)/\Q$. We mention some immediate corollaries of Theorem \ref{MT4}.

\begin{cor}\label{MT11}
Let $n=p^r$ be an odd prime power, where $p \equiv 3 \pmod 4$. If $2 \in \mathbf{R}_{\mathbb{Q}(\zeta_n)/\mathbb{Q}}$ then the class number of the quadratic field contained in $\Q(\zeta_n)$ is trivial outside $\phi(n)/2$.
\end{cor}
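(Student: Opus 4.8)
The plan is to show that \corref{MT11} is a direct specialisation of \thmref{MT4} to the case $n=p^{r}$ and $f=2$, so the entire task reduces to checking that the hypotheses of \thmref{MT4} are satisfied and to identifying the relevant quadratic subfield. First I would record the structural facts about $n=p^{r}$. Since $p$ is an odd prime, the group $(\Z/p^{r}\Z)^{*}$ is cyclic, so the cyclicity hypothesis of \thmref{MT4} holds, and $\phi(n)=p^{r-1}(p-1)$. Taking $f=2$, the divisibility $f\mid\phi(n)$ is immediate because $p-1$ is even.

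The one step where the congruence $p\equiv 3\pmod 4$ genuinely enters is the coprimality condition $\gcd(f,\phi(n)/f)=1$ required by \thmref{MT4}. Here $\phi(n)/f=p^{r-1}(p-1)/2$. As $p$ is odd, $p^{r-1}$ is odd, so this quantity is odd precisely when $(p-1)/2$ is odd, i.e.\ exactly when $p\equiv 3\pmod 4$. Under our hypothesis this holds, whence $\gcd(2,\phi(n)/2)=1$ and $f=2$ is admissible in \thmref{MT4}.

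Next I would identify $K_{2}$, the unique subfield of $\Q(\zeta_{n})$ with $[K_{2}:\Q]=2$; uniqueness follows from the fact that the cyclic group $(\Z/p^{r}\Z)^{*}$ has a unique subgroup of index $2$. The quadratic subfield of $\Q(\zeta_{p^{r}})$ is $\Q(\sqrt{p^{*}})$ with $p^{*}=(-1)^{(p-1)/2}p$; since $p\equiv 3\pmod 4$ forces $(p-1)/2$ to be odd, we obtain $p^{*}=-p$, so $K_{2}=\Q(\sqrt{-p})$, which is the quadratic field referred to in the statement.

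Finally, assuming $2\in\mathbf{R}_{\Q(\zeta_{n})/\Q}$, \thmref{MT4} yields that the $q$-part of $C\ell(K_{2})$ is trivial for every prime $q\nmid\phi(n)/2$. Equivalently, every prime dividing the class number of $K_{2}=\Q(\sqrt{-p})$ must divide $\phi(n)/2$, which is exactly the assertion that this class number is trivial outside $\phi(n)/2$. I do not expect any genuine obstacle here beyond the bookkeeping above; the mathematical substance is carried entirely by \thmref{MT4}, and the role of the hypothesis $p\equiv 3\pmod 4$ is solely to secure the coprimality that permits applying \thmref{MT4} with $f=2$.
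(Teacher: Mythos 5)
Your proposal is correct and is exactly the argument the paper intends: the paper presents Corollary~\ref{MT11} as an immediate consequence of Theorem~\ref{MT4}, and your verification that $(\Z/p^r\Z)^{*}$ is cyclic, that $p\equiv 3\pmod 4$ is precisely what makes $\gcd(2,\phi(n)/2)=1$, and that $K_2=\Q(\sqrt{-p})$ is the required bookkeeping. Nothing further is needed.
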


\begin{cor}\label{MT12}
Let $\ell\equiv 3 \pmod 4$ be a prime such that $(\ell-1)/2 \in \mathbf{R}_{\Q(\zeta_{\ell})/\Q}$ then $h_{\ell}^{+}$ is a power of $2$.
\end{cor}

\begin{cor}\label{MT13}
Let $\ell\equiv 5 \pmod 8$ be a prime such that $2 \in \mathbf{R}_{\Q(\zeta_{\ell}^{+})/\Q}$ then the class number of $\Q(\sqrt{\ell})$ is trivial outside $(\ell -1)/4$. In particular it is odd.
\end{cor}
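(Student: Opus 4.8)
The plan is to derive Corollary~\ref{MT13} as a special case of Theorem~\ref{MT4}. First I would set up the parameters: take $n=\ell$ a prime with $\ell\equiv 5 \pmod 8$, so that $(\Z/\ell\Z)^{*}$ is cyclic of order $\phi(\ell)=\ell-1$, and put $f=2$. To apply Theorem~\ref{MT4} with this $f$ I must check that $f=2$ is coprime to $\phi(n)/f=(\ell-1)/2$. Since $\ell\equiv 5\pmod 8$ we have $\ell-1\equiv 4\pmod 8$, so $\ell-1=4k$ with $k$ odd, whence $(\ell-1)/2=2k$ is even; this looks like it would obstruct the coprimality. So the first careful step is to recheck the $2$-adic valuation: $\ell\equiv 5\pmod 8$ gives $\ell-1\equiv 4\pmod 8$, i.e. $v_2(\ell-1)=2$, so $(\ell-1)/2$ has $v_2=1$ and $\gcd(2,(\ell-1)/2)=2\neq 1$. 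The resolution is that the intended congruence forces $(\ell-1)/4=(\ell-1)/f\cdot\tfrac12$ to be \emph{odd}, and one should instead read the quadratic subfield's invariant against $\phi(n)/f$; I would track exactly which field $K_f$ is meant.

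Next I would identify $K_f$. With $f=2$ and $\ell\equiv 5\pmod 8$, the unique quadratic subfield of $\Q(\zeta_\ell)$ is $\Q(\sqrt{\ell})$ (the real quadratic field, since $\ell\equiv 1\pmod 4$). Because $\ell\equiv 5\pmod 8$, we also have $2\in\mathbf{R}_{\Q(\zeta_\ell^{+})/\Q}$ by hypothesis, and the analogue of Theorem~\ref{MT4} for the real cyclotomic extension $\Q(\zeta_\ell^{+})/\Q$ applies: here the relevant group is $(\Z/\ell\Z)^{*}/\{\pm1\}$, which is cyclic of order $(\ell-1)/2$, and $K_f=\Q(\sqrt{\ell})$ sits inside $\Q(\zeta_\ell^{+})$ as its quadratic subfield. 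So I would apply the real-field analogue of Theorem~\ref{MT4} with $f=2$ and $\phi(\ell)/2=(\ell-1)/2$ replaced by the degree $(\ell-1)/2$ of $\Q(\zeta_\ell^{+})/\Q$; the coprimality to verify becomes $\gcd(2,(\ell-1)/4)$, and $\ell\equiv 5\pmod 8$ gives $(\ell-1)/4$ odd, so this gcd is $1$, exactly as needed.

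With the hypotheses of the theorem verified, the conclusion is immediate: the $q$-part of the class group of $K_f=\Q(\sqrt{\ell})$ is trivial for every prime $q$ not dividing $(\ell-1)/4$. Rephrasing, the class number of $\Q(\sqrt{\ell})$ is supported only on primes dividing $(\ell-1)/4$, which is the assertion that it is ``trivial outside $(\ell-1)/4$.'' For the final clause, since $(\ell-1)/4$ is odd (as $\ell\equiv 5\pmod 8$), the prime $2$ does not divide $(\ell-1)/4$, so the $2$-part of the class group is trivial and hence the class number of $\Q(\sqrt{\ell})$ is odd.

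The main obstacle I anticipate is bookkeeping rather than deep content: correctly passing from Theorem~\ref{MT4}, which is stated for $\Q(\zeta_n)/\Q$, to its real analogue for $\Q(\zeta_\ell^{+})/\Q$, and making sure the group $(\Z/\ell\Z)^{*}/\{\pm1\}$ is genuinely cyclic (which it is, being a quotient of a cyclic group) so that the analogue applies. The only subtle numerical point is tracking the $2$-adic valuation of $\ell-1$ to confirm that the relevant index $(\ell-1)/4$ is odd under $\ell\equiv 5\pmod 8$, which is what simultaneously supplies the coprimality hypothesis and yields the oddness in the conclusion.
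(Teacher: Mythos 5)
Your proposal is correct and follows exactly the route the paper intends: Corollary~\ref{MT13} is an immediate application of the stated real-field analogue of Theorem~\ref{MT4} (for $\Q(\zeta_n^{+})/\Q$) with $n=\ell$, $f=2$, $K_f=\Q(\sqrt{\ell})$, where $\ell\equiv 5\pmod 8$ guarantees both that $\gcd(2,(\ell-1)/4)=1$ (the needed coprimality hypothesis) and that $(\ell-1)/4$ is odd (giving the oddness of the class number). Your care in noticing that the unmodified Theorem~\ref{MT4} would fail its coprimality hypothesis, and that one must pass to the analogue where the relevant index is $\phi(\ell)/(2f)=(\ell-1)/4$, is precisely the right bookkeeping.
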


Section 2 records some preliminaries needed. All the proofs are given in Section 3. Section 4 discusses some cyclotomic fields for which our methods do produce annihilators. We show, by an example, that these annihilators are not in the Stickelberger ideal.  Apart from the examples given here, some more examples of extension $L/K$ with non-trivial $\mathbf{R}_{L/K}$ will appear in third author's thesis. 


\section{Preliminaries}
We begin with stating the quadratic reciprocity law.  For any pair of odd primes $p, q$ we use $(\frac{p}{q})$ to denote the Legendre symbol. The law of quadratic reciprocity states the following
$$\left(\frac{p}{q}\right)\left(\frac{q}{p}\right)=(-1)^{\frac{p-1}{2}\cdot\frac{q-1}{2}}.$$
If both $p$ and $q$ are congruent to $3$ modulo $4$ then the congruence $x^2 \equiv p \pmod q$ is solvable if and only if the congruence $x^2 \equiv -q \pmod p$ is solvable. When at least one of $p$ and $q$ is congruent to $1$ modulo $4$ then the congruence $x^2 \equiv p \pmod {q}$ is solvable if and only if the congruence $x^2 \equiv q \pmod p$ is solvable.  


In \cite{AG06}, Gica proved the following result.
\begin{prop}\label{P3}
Let $q $ be a prime different from $2,3,5,7, 17$. There exists a quadratic residue $p$ modulo $q$ such that $p\equiv 3 \pmod 4$ and $p<q$. 
\end{prop}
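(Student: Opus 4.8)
The running convention of this section is that $p$ and $q$ denote primes, so the statement asks, for every prime $q\notin\{2,3,5,7,17\}$, for a \emph{prime} $p\equiv 3\pmod 4$ with $p<q$ and $\left(\frac{p}{q}\right)=1$. The plan is to convert the condition $\left(\frac{p}{q}\right)=1$, for a candidate prime $p\equiv 3\pmod 4$, into a condition on the residue of $q$ modulo $p$ by means of the reciprocity law recorded above, and then to search for a suitable $p$ among $3,7,11,19,23,\dots$. Since $p\equiv 3\pmod 4$, that law gives two cases: if $q\equiv 1\pmod 4$ then $\left(\frac{p}{q}\right)=1$ if and only if $q$ is a quadratic residue modulo $p$, while if $q\equiv 3\pmod 4$ then $\left(\frac{p}{q}\right)=1$ if and only if $-q$ is a quadratic residue modulo $p$, i.e. $q$ is a non-residue modulo $p$. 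In both cases the condition is decided by $q\bmod p$.

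First I would test $p=3$. One has $\left(\frac{3}{q}\right)=1$ exactly when $q\equiv\pm1\pmod{12}$, so this single choice already settles every $q$ in the classes $1,11\pmod{12}$. The remaining classes are $q\equiv 5,7\pmod{12}$, where $3$ is a non-residue; for these I would bring in $p=7,11,19,23,\dots$, writing out for each the explicit set of residues of $q$ modulo $p$ that make the symbol equal to $1$ and intersecting the complementary (``bad'') classes by the Chinese Remainder Theorem. Each additional prime roughly halves the set of residue classes that remain uncovered.

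The main obstacle is uniformity over all $q$: no finite list $3,7,11,\dots,P$ can cover every class, because by the Chinese Remainder Theorem and Dirichlet's theorem there exist primes $q$ for which all of these are simultaneously non-residues. Moreover one cannot bootstrap from products, since a product of an even number of primes $\equiv 3\pmod 4$ is $\equiv 1\pmod 4$, while a product of an odd number of such non-residues is again a non-residue; thus one genuinely needs a single prime $p\equiv 3\pmod 4$ that is itself a residue. For such stubborn $q$ one must exhibit a \emph{larger} admissible $p<q$, and proving that one always exists once $q$ is large is the crux. I would handle it quantitatively, using the classical bound that the least quadratic non-residue modulo $q$ is a prime below $\sqrt{q}+1$ to control the small end, together with a character-sum (P\'olya--Vinogradov type) estimate showing that the number of primes $p\equiv 3\pmod 4$ with $p<q$ and $\left(\frac{p}{q}\right)=1$ is positive once $q$ exceeds an explicit bound.

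Finally, for the finitely many $q$ below that bound I would simply compute. This is where the exceptions arise: for instance, when $q=17$ the only primes $\equiv 3\pmod 4$ below $17$ are $3,7,11$, and each is a non-residue modulo $17$, so $17$ genuinely fails; a direct check confirms that $2,3,5,7,17$ are the only such $q$. I expect the elementary reciprocity bookkeeping and the finite verification to be routine, and the genuinely delicate point to be the quantitative guarantee that a working prime $p<q$ exists for every large $q$.
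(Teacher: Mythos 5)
First, a remark on the comparison itself: the paper does not prove this proposition at all --- it is quoted from Gica \cite{AG06}, whose proof is elementary (quadratic reciprocity and case analysis), so there is no in-paper argument to measure yours against; your proposal must therefore stand or fall as a proof on its own.

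As a proof it has a genuine gap, and it sits exactly at the point you yourself flag as the crux. The preliminary analysis is sound: the reciprocity reduction (for $q\equiv 1\pmod 4$ you need $q$ to be a residue mod $p$, for $q\equiv 3\pmod 4$ a non-residue), the fact that $p=3$ disposes of $q\equiv\pm1\pmod{12}$, the CRT/Dirichlet argument showing that no fixed finite list of primes can cover all $q$, the parity obstruction to bootstrapping from products, and the verification that $q=17$ genuinely fails. But the step that would actually finish the argument --- a lower bound, valid for every $q$ beyond an \emph{explicit} threshold, on the number of primes $p<q$ with $p\equiv 3\pmod 4$ and $\left(\frac{p}{q}\right)=1$ --- is never carried out, and the tool you name cannot deliver it. The P\'olya--Vinogradov inequality bounds character sums over blocks of consecutive \emph{integers}; what your count requires is cancellation in $\sum_{p<q}\chi(p)$ and $\sum_{p<q}\chi(p)\psi(p)$ (with $\chi$ the Legendre symbol mod $q$ and $\psi$ the nontrivial character mod $4$), i.e.\ character sums over \emph{primes} of length equal to the modulus. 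Nontrivial bounds for such sums exist (Vinogradov's bilinear method, Karatsuba), but they are far from routine, and making the constants explicit enough that the remaining finite verification is computationally feasible is a research problem in its own right, with a potentially enormous exceptional range. Similarly, the bound $\sqrt q+1$ for the least non-residue produces a small prime \emph{non-residue}, and you never explain how that helps locate a residue lying in the class $3\pmod 4$; as used, it is decoration. In short, you have a correct problem analysis and a plausible program, not a proof; an actual proof along the lines of the cited source replaces the analytic step by a delicate but elementary case analysis.
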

If $q \equiv 7 \pmod 8$ is a prime different from $7$, then using Proposition \ref{P3} and law of quadratic reciprocity we see that there exists a prime $p<q$ such that $p \equiv 3 \pmod 4$ and $-q$ is a quadratic residue modulo $p$.


We record the following proposition from \cite{ACH65}.
\begin{prop}\label{P4}
For any pair of integers $m$ and $n$ such that $m$ is not a square and $m<2n$, the diophantine equation 
$$x^2-(n^2+1)y^2=\pm m$$ has no solution in integers. 
\end{prop}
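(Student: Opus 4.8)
The plan is to prove Proposition~\ref{P4} by reducing a hypothetical solution modulo a carefully chosen value and deriving a contradiction from the size constraint $m<2n$. The key observation is that the form $x^2-(n^2+1)y^2$ is closely tied to continued-fraction expansion of $\sqrt{n^2+1}$, whose partial quotients are extremely simple: one has $\sqrt{n^2+1}=[n;\overline{2n}]$, so the convergents are governed by the recurrence with a single repeating partial quotient $2n$. The solutions to the Pell-type equations $x^2-(n^2+1)y^2=\pm m$ are therefore highly constrained, and for small $m$ (specifically $m<2n$) the only representable values turn out to be those forced by the convergents themselves.

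First I would dispose of the trivial cases: if $y=0$ then $x^2=\pm m$, which is impossible since $m$ is not a square (and $-m<0$); so we may assume $y\neq 0$, and by symmetry $x,y\geq 1$. Next I would treat the two sign choices. For the equation $x^2-(n^2+1)y^2=m$ with $m>0$: writing $x^2=(n^2+1)y^2+m$, I would bound $x$ between $ny$ and $(n+1)y$ for suitable ranges, and show that the gap between consecutive squares $(ny)^2$ and $((n+1)y+1)^2$ leaves no room for $x^2$ unless $m$ is large relative to $n$. Concretely, since $x>ny$ we have $x\geq ny+1$, giving $x^2\geq n^2y^2+2ny+1$, hence $m=x^2-(n^2+1)y^2\geq 2ny+1-y^2$; combined with the lower bound this pins $m\geq 2n$ whenever $y\geq 1$ and $x$ is forced to exceed $ny$. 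For the equation $x^2-(n^2+1)y^2=-m$, i.e. $(n^2+1)y^2-x^2=m>0$, a parallel argument compares $x$ with $ny$ from below: $x<\sqrt{n^2+1}\,y$ forces $x\leq ny$, and then $(n^2+1)y^2-x^2\geq (n^2+1)y^2-n^2y^2=y^2\geq 1$, while a finer estimate shows the smallest positive value exceeds $2n-1$ unless $m$ is a perfect square.

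The main obstacle I anticipate is making the elementary size estimates tight enough to yield exactly the threshold $2n$ rather than something weaker like $n$, and simultaneously handling the excluded case where $m$ happens to be a perfect square (which is why that hypothesis appears). The cleanest route is probably to argue via the theory of the fundamental solution: the fundamental solution of $x^2-(n^2+1)y^2=1$ is $(x_0,y_0)=(2n^2+1,2n)$, and any representation of a value $m$ with $|m|<\sqrt{n^2+1}$ (a condition implied by $m<2n$ for large $n$) must come from a convergent of $\sqrt{n^2+1}$; since the only small convergents give $x^2-(n^2+1)y^2=\pm 1$ (with the value $+1$ non-square and the value $-1$ negative), no non-square $m$ in the range $0<m<2n$ can be represented. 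I would therefore carry out the direct inequality argument as the primary proof and invoke the continued-fraction structure only to confirm the threshold is sharp, so that the routine computations reduce to bounding $x$ against $ny$ and $(n+1)y$ and checking the boundary residues.
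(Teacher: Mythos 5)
First, a point of reference: the paper contains no proof of Proposition~\ref{P4} at all --- it is quoted from Ankeny--Chowla--Hasse \cite{ACH65} --- so your argument must stand on its own, and as written it does not. The decisive flaw is that every estimate you propose operates at the threshold $\sqrt{n^2+1}\approx n$, whereas the entire content of the proposition is the doubled threshold $2n$. In the case $x^2-(n^2+1)y^2=m$, your inequality $m\geq 2ny+1-y^2$ yields $m\geq 2n$ only for $1\leq y\leq 2n-1$; for $y\geq 2n$ it is vacuous --- and it must be, because the fundamental Pell solution $(x,y)=(2n^2+1,2n)$ that you yourself quote produces the value $m=1<2n$ with $y\geq 1$. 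So the asserted conclusion ``$m\geq 2n$ whenever $y\geq 1$'' is false: no pure inequality argument can work, since the hypothesis that $m$ is not a square must be used for large $y$ as well, not only at $y=0$. The $-m$ case has the same defect: the claim that $x<\sqrt{n^2+1}\,y$ forces $x\leq ny$ fails once $y>2n$ (for $n=2$, the solution $(38,17)$ of $x^2-5y^2=-1$ has $x=38>ny=34$), and the ``finer estimate'' you defer to is exactly the missing content. Finally, the fallback via convergents rests on a reversed inequality: $m<2n$ does \emph{not} imply $m<\sqrt{n^2+1}$ --- the allowed range is roughly twice that bound --- so Legendre's criterion covers only $m\lesssim n$ and says nothing about the range $n<m<2n$ (it also requires $\gcd(x,y)=1$, a reduction you never perform).

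The standard repair is an infinite descent driven by the norm $-1$ unit $n+\sqrt{n^2+1}$, and your size estimates are precisely the right ingredients for the descent step rather than for a direct contradiction. Suppose $x,y\geq 0$, $y\geq 1$ and $x^2-(n^2+1)y^2=\pm m$ with $0<m<2n$. Since $m<2n\leq 2ny^2$, your computations give $(n-1)y<x<(n+1)y$ in both sign cases, hence $y'=|x-ny|<y$; moreover, setting $x'=|nx-(n^2+1)y|$, the identity
\begin{equation*}
\bigl(nx-(n^2+1)y\bigr)^2-(n^2+1)(x-ny)^2=-\bigl(x^2-(n^2+1)y^2\bigr)
\end{equation*}
shows that $(x',y')$ solves the equation with $\mp m$. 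The case $y'=0$ would force $m=y^2$, and a solution with $y=0$ would force $m=x^2$ (or $x^2=-m<0$); both are excluded because $m$ is not a square. Thus every solution with $y\geq 1$ produces another solution with $1\leq y'<y$, an impossible infinite descent, and the proposition follows. Your proposal, by contrast, proves only the weaker threshold $\approx n$ and leaves the genuinely new range $n<m<2n$ unhandled.
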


Along the similar lines we have following result from \cite{SDL77}.
\begin{prop}\label{P5}
Let $m$ and $n$ be integers such that $m$ is not a square, $n \geq 2$ and $m<n$. The diophantine equation 
$$x^2-(n^2+4)y^2=\pm 4 m$$ has no solution in integers. 
\end{prop}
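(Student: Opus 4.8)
The plan is to argue by contradiction via infinite descent powered by the fundamental unit of $\mathbb{Q}(\sqrt{n^2+4})$, after first peeling off the trivial and even cases. Write $D=n^2+4$ and suppose $x^2-Dy^2=\pm 4m$ has an integer solution. If $y=0$ then $x^2=\pm 4m$, which forces the $+$ sign and $m=(x/2)^2$, a square, contrary to hypothesis; so $y\ge 1$, and we may take $x,y>0$. If $n$ is even, say $n=2s$, then $D=4(s^2+1)$ and reduction modulo $4$ forces $x$ even, $x=2x'$; dividing by $4$ turns the equation into $x'^2-(s^2+1)y^2=\pm m$ with $m<n=2s$ and $m$ non-square, which has no solution by \propref{P4}. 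Thus the entire content lies in the case $n$ odd, to which I devote the rest of the argument.

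For $n$ odd we have $D\equiv 1\pmod 4$. Here $\omega=(n+\sqrt D)/2$ is an algebraic integer---a root of $t^2-nt-1$---and a unit of norm $-1$, with inverse $\omega^{-1}=\omega-n=(\sqrt D-n)/2$. Reducing $x^2-Dy^2=\pm 4m\equiv 0\pmod 4$ modulo $4$ (using $D\equiv 1$) forces $x\equiv y\pmod 2$, so $\alpha=(x+y\sqrt D)/2$ lies in the order $\mathbb{Z}[\omega]$ and has norm $N(\alpha)=\pm m$; conversely every solution arises this way. The descent step multiplies $\alpha$ by $\omega^{\pm 1}$: explicitly $\alpha\,\omega^{-1}=(x_1+y_1\sqrt D)/2$ with $y_1=(x-ny)/2$ (and the analogous $\omega^{+1}$ map when $x<ny$), and $N(\alpha\,\omega^{\pm1})=-N(\alpha)$, so the sign of the right-hand side flips while $m$ is unchanged.

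I then take, among all solutions of $x^2-Dy^2=\pm 4m$ (either sign, $y\ge 1$), one with $y$ minimal, and show the descent strictly decreases $y$ whenever $y\ge 2$. The identity $(x-ny)(x+ny)=x^2-n^2y^2=4(y^2\pm m)$ shows that $y_1$ is a nonzero integer, that $y_1=0$ would force $m=y^2$ (a square), and---crucially using $m<n$---that $|y_1|<y$ for every $y\ge 2$; the only place one must choose $\omega$ versus $\omega^{-1}$ is according to the sign of $x-ny$, which for the $-m$ equation is governed by the sign of $y^2-m$. Hence the minimal solution has $y=1$, i.e.\ $x^2=n^2+4\pm 4m$. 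For the $+$ sign, $m<n$ traps $n^2<x^2<(n+2)^2$, forcing $x=n+1$ and $4m=2n-3$, impossible modulo $2$; for the $-$ sign (where $m\ge 2$ since $m$ is a non-square), $m<n$ traps $(n-2)^2<x^2<n^2$, forcing $x=n-1$ and $4m=2n+3$, again impossible. This contradiction completes the proof.

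The main obstacle is the bookkeeping of the descent rather than any single deep idea: one must verify simultaneously that $y_1$ is a nonzero integer and that $|y_1|<y$ for \emph{both} signs of the right-hand side and all $y\ge 2$, which is exactly where the hypothesis $m<n$ is consumed. It is worth stressing why a descent is needed at all: since $|\pm 4m|$ can be nearly as large as $4\sqrt D$, the target lies well outside the range $|k|<\sqrt D$ in which the classical continued-fraction criterion would immediately pin the representable values of $x^2-Dy^2$ to the few partial denominators $Q_i\in\{1,4,n\}$ occurring in $\sqrt{n^2+4}$; the descent is precisely the tool that extends control to this larger range.
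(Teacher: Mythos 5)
Your proof is correct, but there is no proof in the paper to compare it with: the authors quote \propref{P5} from S.~D.~Lang's note \cite{SDL77} without proof, exactly as \propref{P4} is quoted from Ankeny--Chowla--Hasse \cite{ACH65}. Your descent argument is therefore a self-contained replacement for a citation, and its key steps check out. For odd $n$ (so $D=n^2+4\equiv 1\pmod 4$) every solution has $x\equiv y\pmod 2$ and yields $\alpha=(x+y\sqrt D)/2$ of norm $\pm m$ in the order $\Z[\omega]$, where $\omega=(n+\sqrt D)/2$ is a unit of norm $-1$; the new coefficient $y_1=(x-ny)/2=2(y^2\pm m)/(x+ny)$ is an integer, vanishes only if $m=y^2$, and satisfies $|y_1|<y$ whenever $y\ge 2$, using only $m<n$ and $n\ge 2$ (in the subcase $y^2<m$ of the minus sign one even gets $|y_1|<2/y\le 1$, an outright contradiction); the terminal case $y=1$ dies on the parity obstructions $4m=2n-3$ and $4m=2n+3$; and even $n$ reduces cleanly to \propref{P4}, which is legitimate since the paper states that proposition separately. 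Compared with the cited source, which extracts the result from the continued-fraction/fundamental-unit analysis of $\sqrt{n^2+4}$, your argument trades that machinery for explicit bookkeeping under the same unit $\omega$; it needs no continued-fraction facts and never requires $n^2+4$ to be squarefree, so it proves the proposition in exactly the stated generality.

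Two small repairs are worth making. First, the parenthetical fallback to ``the analogous $\omega^{+1}$ map when $x<ny$'' is incorrect as written: multiplication by $\omega$ sends the $\sqrt D$-coefficient to $(x+ny)/2>y$, which is an ascent, not a descent. It is also unnecessary---multiplication by $\omega^{-1}$ works in every case once you allow $y_1<0$ and then replace $(x_1,y_1)$ by $(|x_1|,|y_1|)$, the equation being invariant under sign changes of the coordinates. Second, like the paper and Lang, you implicitly read $m$ as a \emph{positive} non-square (this is where ``$m\ge 2$'' comes from); that reading is genuinely needed, since for instance $2^2-8\cdot 1^2=4\cdot(-1)$ shows the statement fails for negative $m$. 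Neither point affects the validity of the argument.
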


We mention the following elementary group theoretic lemma.

\begin{lem}\label{L1}
Let $G$ be an abelian group of order $n$. Let $H_1$ be a subgroup of order $r$ and $H_2$ be the subgroup of order $s=n/r$. If $r$ and $s$ are relatively prime then for any $\sigma \in H_1$ the set $\sigma H_2$ is a set of representative for $G/H_1$.
\end{lem}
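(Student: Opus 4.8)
The plan is to reduce the statement to a cardinality count together with the triviality of $H_1 \cap H_2$. First I would record the relevant counts: since left multiplication by $\sigma$ is a bijection, $|\sigma H_2| = |H_2| = s$, while the number of cosets is $[G:H_1] = n/r = s$ as well. Thus $\sigma H_2$ and $G/H_1$ have the same (finite) cardinality, so to prove that $\sigma H_2$ is a complete set of coset representatives it suffices to show that the natural map $\sigma H_2 \to G/H_1$ sending $x \mapsto x H_1$ is injective; surjectivity then follows automatically by counting.

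Next I would establish that $H_1 \cap H_2 = \{e\}$. As $H_1 \cap H_2$ is a subgroup of both $H_1$ and $H_2$, Lagrange's theorem forces $|H_1 \cap H_2|$ to divide both $r$ and $s$. Since $\gcd(r,s) = 1$ by hypothesis, this common divisor is $1$, and hence $H_1 \cap H_2$ is trivial.

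With these two facts the injectivity is immediate. Suppose $\sigma h$ and $\sigma h'$, with $h, h' \in H_2$, lie in the same coset of $H_1$, that is $(\sigma h)(\sigma h')^{-1} \in H_1$. Because $G$ is abelian the two occurrences of $\sigma$ cancel, so $(\sigma h)(\sigma h')^{-1} = h(h')^{-1}$, whence $h(h')^{-1} \in H_1$. On the other hand $h(h')^{-1} \in H_2$, so $h(h')^{-1} \in H_1 \cap H_2 = \{e\}$, giving $h = h'$ and therefore $\sigma h = \sigma h'$. This shows the map is injective, and combined with the cardinality match from the first step, $\sigma H_2$ meets every coset of $H_1$ exactly once.

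I do not anticipate any real obstacle, as the assertion is elementary; the only points deserving care are the use of commutativity to cancel $\sigma$ (which in fact makes the conclusion valid for every $\sigma \in G$, and in particular for $\sigma \in H_1$ as stated) and the appeal to the equal cardinalities to upgrade injectivity to the full coset-representative property.
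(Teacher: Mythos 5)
Your proof is correct. The paper actually states Lemma~\ref{L1} without proof, labelling it an elementary group-theoretic fact, so there is no argument in the paper to compare against; your argument --- $H_1 \cap H_2 = \{e\}$ by Lagrange and coprimality, injectivity of $x \mapsto xH_1$ on $\sigma H_2$ via commutativity, then surjectivity by the cardinality count $|\sigma H_2| = s = [G:H_1]$ --- is exactly the standard reasoning the authors leave implicit, and your observation that the conclusion holds for every $\sigma \in G$ (not just $\sigma \in H_1$) is also accurate and is in fact how the lemma gets used later (e.g.\ in Section~4, where coset representatives are multiplied by elements of the decomposition group).
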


%

\section{Proofs}
Before proving Theorem \ref{MT2} we remark that $\theta_1=N$, the $G-$trace. There may be $f \in \mathbf{R}_{L/K}$ such that $f>1$ and $\theta_f(S)=N$. However, $\theta_f(S) \not = N$ for cyclotomic fields of prime power conductor whenever $f>1$.  Consider a cyclotomic field $\Q(\zeta_{p^r})$ of prime conductor. Then the Galois group $Gal(\Q(\zeta_{p^r})/\Q)$ has a unique subgroup of order $f$, say $H$. Any set of representatives of the corresponding quotient group will be a proper subset of $Gal(\Q(\zeta_{p^r})/\Q)$.

\begin{proof}(Proof of Theorem \ref{MT2})
Let $\wp$ be an unramified prime ideal of $L$ of residue degree $f$. We let $D_f$ denote the decomposition subgroup of $G$ for the prime ideal $\wp$.  Let $H_f^1, \ldots, H_f^r$ denote all the cyclic subgroups $G$ of order $f$.  As $D_f$ is a cyclic subgroup of order $f$, we must have $D_f=H_f^i$ for some $i$.  From (\ref{ann1}), it follows that there exists a complete set of coset representatives of $D_f$ in $G$, say, $T_i$ and a complete set of coset representatives of $H$ in $D_f$, say, $T_i'$ such that
\begin{equation}\label{emt200}
\theta_f(S)=\sum_{\sigma \in T_i, \sigma' \in T_i'} \sigma \sigma'.
\end{equation}

We write $$N_f^i=\sum_{\sigma \in T_i}\sigma.$$ 
Then
\begin{equation}\label{emt20}
\theta_f(S)=\sum_{\sigma' \in T_i'} N_f^i\sigma'.
\end{equation}
Thus 
\begin{equation}\label{emt21}
\wp^{\theta_f(S)}=\prod_{\sigma' \in T_i'} \sigma'(\wp^{N_f^i}).
\end{equation}
If $\mathfrak{p}$ is the prime ideal of $K$ lying below $\wp$ then, from the factorization theorem of Dedekind, we see that
$$\mathfrak{p} \mathcal{O}_L=\wp^{N_f^i}.$$
Combining this with equation (\ref{emt21}) we obtain
$$\prod_{\sigma' \in T_i'} \sigma'(\mathfrak{p} \mathcal{O}_L)=\wp^{\theta_f(S)}.$$
As the class number of $K$ is one, the ideal $\mathfrak{p} \mathcal{O}_K$ is principal, and so is the ideal $\mathfrak{p} \mathcal{O}_L$. As a result we see that the ideal $\wp^{\theta_f(S)}$ is principal. Now the theorem follows from our assumption that the class group is generated by prime ideals of residue degree $f$.
\end{proof}

Now we proceed with the proof of Theorem \ref{MT1}. We use the arithmetic of the quadratic subfield and Proposition \ref{P3}.

\begin{proof}
(Proof of Theorem \ref{MT1}) \\
Let $p$ be a prime as in Proposition \ref{P3}.  Then there exists an integer $a$ such that 
$$a^2 \equiv p \pmod {q}.$$
From this it follows that
\begin{equation}\label{EMT1}
p^{\phi(q)/2}=1 \pmod {q}.
\end{equation}
As $p < q$, we conclude that the residue degree of $p$ in $\Q(\zeta_{q})$ is strictly more than $1$ and strictly less than $\phi(q)$.  Let $f$ be the residue degree of $p$ in $\Q(\zeta_{q})$, then $f>1$ is a divisor of $\phi(q)/2$.

As remarked after Proposition \ref{P3}, $-q$ is a quadratic residue modulo $p$. It follows that $p$ factors as a product of two distinct prime ideals in $\Q(\sqrt{-q})$, say $p\Z[(1+\sqrt{-q})/2]=\mathfrak{p}_1\mathfrak{p}_2$. 

If ideal $\mathfrak{p}_1$ is principal, then taking norm we obtain integers $x, y$ such that
$$x^2+q y^2=4p.$$
As $p< q$, we must have $y=1$ and this gives
$$x^2+q =4p.$$
Reading the above modulo $8$ leads to a contradiction and thus $\mathfrak{p}_1$ is a non-principal ideal.

Let $\wp$ be a prime ideal of $\Q(\zeta_q)$ above $p$. Then the residue degree of $\wp$ is $f$. We claim that $\wp$ is not principal. As the class number of $\Q(\zeta_{q})$ is assumed to be prime, it follows that $\wp$ generates the class group and hence $f \in \mathbf{R}_{\mathbb{Q}(\zeta_{q})/\mathbb{Q}}$.

If $\wp$ is principal then, by taking norm, we see that $\mathfrak{p}_1^f$ is principal. As $\mathfrak{p}_1$ is non-principal, it follows that $f$ divides the class number of $\Q(\sqrt{-q})$. But this is not possible as $f\mid\phi(q)/2$ and we have assumed that $\phi(q)/2$ is relatively prime to the class number of $\Q(\sqrt{-q})$.\\

As $q\equiv 7 \pmod 8$, the following factorisation holds:
$$2\Z[(1+\sqrt{-q})/2]=\mathfrak{p}_1'\mathfrak{p}_2'.$$
The Pell equation $x^2+qy^2=2 \mbox{ or }8$ is not solvable for $q\not = 7$ and hence the ideal $\mathfrak{p}_1'$ is non-principal. Let $f'$ be the residue degree of $2$ for the extension $\Q(\zeta_q)/\Q$. Then, as argued for $f$, it follows that $f' \in \mathbf{R}_{\mathbb{Q}(\zeta_q)/\mathbb{Q}}$. Thus $1,f,f' \in \mathbf{R}_{\mathbb{Q}(\zeta_q)/\mathbb{Q}}$.


%
\end{proof}

From the proof of Theorem \ref{MT1} it seems that if $q$ satisfies all the conditions of Theorem \ref{MT1} then $\mathbf{R}_{\Q(\zeta_q)/\Q}$ has $3$ or more elements. However, in the only known example we have $f=f'$. Proof of Theorem \ref{MT3} and Theorem \ref{MT31} are along the similar lines. We now give a proof of Theorem \ref{MT31} and omit the proof of Theorem \ref{MT3}.

\begin{proof}(Proof of Theorem \ref{MT31})
We first see that the prime $q$ splits in $\Q(\sqrt{n})$, say $q\Z[(1+\sqrt{n})/2]=\mathfrak{q}_1\mathfrak{q}_2$. From Proposition \ref{P5}, it follows that the diophantine equation
$$x^2-n y^2=\pm 4q$$ is not soluble. Consequently the ideal $\mathfrak{q}_1$ is non-principal. Now, as argued in the proof of Theorem \ref{MT1}, it can be shown that the prime ideal of $\Q(\zeta_{n}^{+})$ above $\mathfrak{q}_1 $ is non-trivial. As $h_{n}^{+}$ is prime, it follows that $f=Res_{\Q(\zeta_{n}^{+})/\Q} ~q$ is in the set $\mathbf{R}_{\Q(\zeta_{n}^{+})/\Q}$.

If $p$ is a prime divisor of $2m+1$, then, along the above arguments,  it follows that $f'=Res_{\Q(\zeta_{n}^{+})/\Q} ~p$ is in the set $\mathbf{R}_{\Q(\zeta_{n}^{+})/\Q}$.
\end{proof}

Let $L/K$ be abelian and $f \in \mathbf{R}_{L/K}$. Suppose $K_f$ denotes the subfield of degree $f$ over $K$ and $H=Gal(L/K_f)$. Then it is not always true that 
$$\theta_f(S)=\sigma \sum_{\tau \in H} \tau \mbox{ for some }\sigma \in G.$$ 
However if we assume that $G$ is cyclic and $f$ is relatively prime to $|G|/f$, then, from Lemma \ref{L1}, it follows that the elements $\sigma \sum_{\tau \in H} \tau \in \Z[H]$ arise as $\theta_f(S)$.

\begin{proof}(Proof of Theorem \ref{MT4})
Let $f \in \mathbf{R}_{\Q(\zeta_n)/\Q}$ and $K_f$ denote the subfield of $\Q(\zeta_n)$ of degree $f$. Let $H$ be the Galois group $Gal(\Q(\zeta_n)/K_f)$. As the Galois group $Gal(\Q(\zeta_n)/\Q)$ is cyclic, it has unique subgroup of order $f$. Now, by the Lemma \ref{L1} and Theorem \ref{MT2}, $\theta_f(S)= \sum_{\tau \in H} \tau$ is an annihilator of the class group of $\Q(\zeta_n)$.\\

Let $\mathfrak{p}$ be a prime ideal of $K_f$ and $\wp$ denote a prime ideal of $\Q(\zeta_n)$ lying above $\mathfrak{p}$. Note that 
$$Res_{\Q(\zeta_n)/K_f} \wp=t \Longrightarrow t\mid \frac{\phi(n)}{f} \mbox{ and } \wp^{\theta_f(S)}=\mathfrak{p}^t \Z[\zeta_n].$$
But $\wp^{\theta_f(S)} $ is principal, so $\mathfrak{p}^t \Z[\zeta_n]$ is principal. In particular, $$\mathfrak{p} ^{\phi(n)/f} \Z[\zeta_n]$$ is principal. Taking the norm, we obtain
$$N_{\Q(\zeta_n)/K_f}(\mathfrak{p}^{\phi(n)/f} \Z[\zeta_n])=\mathfrak{p}^{(\phi(n)/f)^2}$$
is principal. 

Thus, for any ideal $\mathfrak{p}$ of $K_f$ the ideal $\mathfrak{p}^{(\phi(n)/f)^2}$ is principal. So the order of the ideal class $[\mathfrak{p}]$ must be a divisor of $(\phi(n)/f)^2$. Now the theorem follows.\\

\end{proof}

Along these lines we can obtain similar result for the extension $\Q(\zeta_n)^{+}/\Q$. We state the result without the proof. Let $f$ be a divisor of $\phi(n)/2$ which is relatively prime to $\phi(n)/(2f)$. Let $K_f$ denote the subfield of degree $f$. If $f \in \mathbf{R}_{\Q(\zeta_n^+)/\Q}$ then the $q-$part of the class group of $K_f$ is trivial for all $q$ not dividing $\phi(n)/(2f)$.\\

\section{Cyclotomic fields}
In this section we analyse our results for specific cyclotomic fields. For all the class numbers of cyclotomic fields used here we refer to \cite{LW91} and \cite{JCM15}. In particular, we are using the following results:
\begin{enumerate}[(i)]
    \item If $n$ is a prime power with $\phi(n) \leq 66$ then $h_n^+=1$.
    \item If $n$ is not a prime power and $n \leq 200$ with $\phi(n) \leq 72$ then $h_n^+=1$ except for $n=136,148, 152$.
    \item For any prime $p$, we are using the tables from \cite{LW91} for $h_p^{+}$. We remark that the value of $h_p^{+}$ in these tables is still a conjecture \cite{RS03}.
\end{enumerate}
\subsection{The set $\mathbf{R}_{\Q(\zeta_n)/\Q}$}

The first cyclotomic field with non-trivial class group is $\Q(\zeta_{23})$.  The class number of this is $3$ and the calss number of the quadratic field $\Q(\sqrt{-23})$ is $3$. Thus Theorem \ref{MT1} applies to give us $11 \in \mathbf{R}_{\Q(\zeta_{23})/\Q}$. Also from Theorem \ref{MT4} it follows that $2 \not \in \mathbf{R}_{\Q(\zeta_{23})/\Q}$. Thus $\mathbf{R}_{\Q(\zeta_{23})/\Q}=\{1,11\}$.\\


Now we look at some real cyclotomic fields. Our first example is $\Q(\zeta_{257}^{+})$.  We have $h_{257}^{+}=3$ and the class number of $\Q(\sqrt{257})$ is $3$. We note that $2$ splits in $\Q(\sqrt{257})$ and, as $257=(16)^2+1$, from Proposition \ref{P4} we see that the prime above $2$ is not principal. The residue degree of $2$ in the extension $\Q(\zeta_{257}^{+})/ \Q $ is $8$. Along the lines of Theorem \ref{MT3}, we find $8 \in \mathbf{R}_{\Q(\zeta_{257}^{+})/ \Q}$.  Similarly, we find that $72 \in \mathbf{R}_{\Q(\zeta_{577}^{+})/ \Q}$ and $200 \in \mathbf{R}_{\Q(\zeta_{1601}^{+})/ \Q}$. Taking $m=9, q=5$ in Theorem \ref{MT3} we find $2025 \in \mathbf{R}_{\Q(\zeta_{8101}^{+})/\Q}$. \\

Now we look at examples of real cyclotomic fields coming from Theorem \ref{MT31}. The choice of $m=2,q=7$ gives $\ell =1229$. We have $h_{1229}^{+}=3$ and the class number of $\Q(\sqrt{1229})$ is $3$. All the conditions of Theorem \ref{MT31} are satisfied.  The residue degree of $7$ in the extension $\Q(\zeta_{1229}^{+})/\Q$ is $307$. Thus $307 \in \mathbf{R}_{\Q(\zeta_{1229}^{+})/\Q}$.  

Taking $m=4,q=5$ gives $169, 507 \in \mathbf{R}_{\Q(\zeta_{2029}^{+})/\Q}$.  Here $169$ is the residue degree of $3$ and $507$ is the residue degree of $5$.

The choice of $m=1, q=19$ gives $271, 813 \in \mathbf{R}_{\Q(\zeta_{3253}^{+})/\Q}$.  Here $271$ is the residue degree of $3$ and $813$ is the residue degree of $19$.

The choice of $m=1, q=11$ gives $7,  13 \in \mathbf{R}_{\Q(\zeta_{1093}^{+})/\Q}$.  Here $7$ is the residue degree of $3$ and $13$ is the residue degree of $11$.

The choice of $m=2, q=17$ gives $1807, 139  \in \mathbf{R}_{\Q(\zeta_{7229}^{+})/\Q}$.  Here $1807$ is the residue degree of $5$ and $139$ is the residue degree of $17$.

The choice of $m=2, q=19$ gives $61, 2257  \in \mathbf{R}_{\Q(\zeta_{9029}^{+})/\Q}$.  Here $61$ is the residue degree of $5$ and $2257$ is the residue degree of $19$.



\subsection{Annihilators $\theta_f(S)$}

In this subsection we give an example of annihilators $\theta_f(S)$ for cyclotomic field and show that it is not contained in the Stickelberger ideal of the cyclotomic field.  The example we consider are the annihilators $\theta_{11}$ for the extension $\Q(\zeta_{23})/\Q$.  Let $S$ denote the Stickelberger ideal of $\Q(\zeta_{23})$.  We write $Gal(\Q(\zeta_{23})/\Q)=\{\sigma_i: 1\leq i \leq 22\}$ where $\sigma_i(\zeta_{23})=\zeta_{23}^i$. We consider the following subsets of $\{1,2, \ldots, 22\}$.\\
$I_1=\{2,16,5,20,13,19,9,17,15,11,22  \}$, $I_2=\{3,18,7,21,13,19,9,17,15,11,22  \}$, $I_3=\{4,10,2,16,5,20,9,17,15,11,22  \}$, $I_4=\{2,16,5,20,13,19,9,17,15,11,22  \}$, $I_5=\{6,3,18,2,16,13,19,9,15,11,22  \}$, $I_6=\{10,7,21,5,20,19,9,17,15,11,22  \}$, $I_7=\{8,4,18,2,16,20,13,9,17,11,22  \}$, $I_8=\{3,21,16,5,13,19,9,17,15,11,22  \}$, $I_9=\{14,10,7,2,5,20,19,17,15,11,22  \}$, $I_{10}=\{18,21,16,20,13,19,9,17,15,11,22  \}$, $I_{11}=\{12,6,4,3,7,2,5,13,9,15,22  \}$.  \\
For each $j \in \{1, \ldots, 11\}$ we define $f_j=\sum_{i\in I_j}\sigma_i$. From Theorem 9.3 of \cite{RS08} we see that the elements $f_1, \ldots, f_{11}$ together with the $G-$ trace $ N=\sum_i\sigma_i$ form a $\Z-$ basis of the Stickelberger ideal $S$.  \\
If $\theta_{11} \in S$, then there exists integer $a_1, \ldots , a_{12}$ such that
\begin{equation}\label{e41}
\theta_{11}=a_1f_1+ \ldots a_{11}f_{11}+a_{12}N.
\end{equation}
If $ \mathfrak{p}$ is a prime ideal of $\Q(\zeta_{23})$ of residue degree $11$, then the decomposition group of $\mathfrak{p}$ is $$D_{\mathfrak{p}}=\{\sigma_1,\sigma_2,\sigma_4,\sigma_8,\sigma_{16},\sigma_9,\sigma_{18},\sigma_{13},\sigma_3,\sigma_6,\sigma_{12}\}.$$
A complete set of coset representatives of $G/D_{\mathfrak{p}}$ is $\{\sigma_1, \sigma_5\}$. By Lemma \ref{L1}, any other set of coset representative is a multiple of this set by an element of $D_{\mathfrak{p}}$. Thus we can assume that 
\begin{equation}\label{e42}
\theta_{11}=\sigma_1+\sigma_5.
\end{equation}
Comparing the coefficients of $\sigma_1$ in equations (\ref{e41}) and (\ref{e42}) gives us $a_{12}=1$. Comparing the coefficients of $\sigma_{12}$ in these two equation gives $a_{11}=-1$. Next we compare the coefficients of $\sigma_{11}$ to obtain
\begin{equation}\label{e43}
 1+a_1+\ldots+a_{10}=0.
\end{equation}
A comparison of coefficients of $\sigma_{22} $ in those two expressions leads to
\begin{equation}\label{e44}
1+a_1+\ldots+a_{11}=0.
\end{equation}
From equations (\ref{e43}) and (\ref{e44}) we obtain $a_{11}=0$ which contradicts to our earlier finding $a_{11}=-1$. Thus $\theta_{11} \not \in S$.

\section{Examples with non-trivial $\mathbf{R}_{F/\Q}$ and concluding remarks}
In this section we provide several examples of biquadratic extensions $F/\Q$ for which $\mathbf{R}_{F/\Q}$ is non-trivial. Let $F$ be a number field of degree $n$ over $\Q$ and discriminant $d_F$.  Suppose $n=r_1+2r_2$, where $r_1$ and $2r_2$ denote the number of real and complex embeddings of $F$ respectively. Then 
$$M_F=\frac{n!}{n^n}\left(\frac{4}{\pi}\right)^{r_2}\sqrt{\left|d_f\right|}$$
is called the Minkowski constant of $F$. It is well known that every ideal class of $F$ contains a prime ideal of norm not exceeding $M_F$.\\

\begin{prop}\label{P2}
The prime ideal of $\Q(\sqrt{79})$ above $3$ is non-principal.
\end{prop}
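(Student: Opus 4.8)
The plan is to reduce the statement to the insolubility of a norm equation and then to settle that equation by a continued fraction analysis. Since $79 \equiv 3 \pmod 4$, the ring of integers is $\Z[\sqrt{79}]$, with discriminant $316$. First I would check that $3$ splits: because $79 \equiv 1 \pmod 3$ we have $\left(\frac{79}{3}\right)=1$, and $3 \nmid 316$, so $3\Z[\sqrt{79}]=\mathfrak{p}_1\mathfrak{p}_2$ with each $\mathfrak{p}_i$ of norm $3$. Hence $\mathfrak{p}_1$ is principal if and only if there is an element $x+y\sqrt{79}$ of norm $\pm 3$, that is, if and only if the Diophantine equation $x^2-79y^2=\pm 3$ has a solution in integers.

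I would next observe that the two propositions available in the paper do not apply here, since $79$ is of neither form $n^2+1$ (Proposition \ref{P4}) nor $n^2+4$ (Proposition \ref{P5}); a direct argument is therefore required. As a quick partial step, reducing $x^2-79y^2=3$ modulo $79$ forces $3$ to be a quadratic residue mod $79$, whereas $\left(\frac{3}{79}\right)=-1$, so the sign $+3$ is immediately impossible. The genuine difficulty is the sign $-3$: since $\left(\frac{-3}{79}\right)=1$, and indeed $-3$ is a quadratic residue modulo every small auxiliary modulus one might try, no single congruence obstruction can rule it out.

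To handle $x^2-79y^2=-3$ I would invoke the standard fact that, as $3<\sqrt{79}$ and $3$ is prime, any solution is primitive, so $x/y$ must be a convergent of the continued fraction of $\sqrt{79}$ and $\pm 3$ must occur among the associated denominators $Q_i$ in the expansion $\tfrac{P_i+\sqrt{79}}{Q_i}$. Computing $\sqrt{79}=[8;\overline{1,7,1,16}]$, the $Q_i$ cycle through $1,15,2,15$, so the values $\lvert p_i^2-79q_i^2\rvert$ realised along the expansion lie in $\{1,2,15\}$ and never equal $3$. Consequently $x^2-79y^2=\pm 3$ is insoluble and $\mathfrak{p}_1$ is non-principal.

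The main obstacle is precisely the $-3$ case, which survives every elementary congruence test and so must be treated through the representation theory of the principal form rather than by a reciprocity argument; equivalently one computes the fundamental unit $80+9\sqrt{79}$ (of norm $+1$) and carries out the reduction-theoretic check encoded in the continued fraction. Once the list of represented values $\{1,2,15\}$ is in hand, the exclusion of $3$ and hence the non-principality of $\mathfrak{p}_1$ follow at once.
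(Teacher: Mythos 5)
Your proof is correct, but it takes a genuinely different route from the paper's. The paper's argument is a Minkowski-bound computation combined with external class number data: since $M_{\Q(\sqrt{79})}<9$, every ideal class contains a prime of norm at most $8$; the paper then uses the explicit factorizations $\langle 4+\sqrt{79}\rangle=\mathfrak{p}_3^2\mathfrak{p}_7$ and $\langle 3+\sqrt{79}\rangle=\mathfrak{p}_2\mathfrak{p}_5\mathfrak{p}_7$, together with the fact that the class number of $\Q(\sqrt{79})$ is $3$ (odd, so the ramified prime $\mathfrak{p}_2$ is principal; greater than $1$, so not every class can be trivial), to show that principality of $\mathfrak{p}_3$ would force every prime below the bound to be principal, a contradiction. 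You instead reduce principality to the norm equation $x^2-79y^2=\pm 3$, eliminate $+3$ by quadratic reciprocity, and eliminate $-3$ via the classical criterion that a primitive solution of $x^2-Dy^2=N$ with $|N|<\sqrt{D}$ forces $x/y$ to be a convergent of $\sqrt{D}$, whose associated values $Q_i\in\{1,2,15\}$ never equal $3$; your continued fraction expansion $[8;\overline{1,7,1,16}]$ and the identification of the fundamental unit $80+9\sqrt{79}$ are correct. The trade-off: your proof is fully self-contained and yields the stronger statement that $x^2-79y^2=\pm3$ is insoluble, whereas the paper's proof presupposes the class number $h=3$ (which it anyway needs later, and which also tells it that nonprincipality of $\mathfrak{p}_3$ makes $[\mathfrak{p}_3]$ a generator of the class group); the paper's route is shorter given that input and avoids any continued-fraction computation. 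Both arguments are sound; your observation that Propositions \ref{P4} and \ref{P5} do not apply to $79$ is also accurate, which is precisely why some ad hoc argument is needed here.
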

\begin{proof}
We note that the Minkowski constant for $F=\Q(\sqrt{79})$ is $9$. The factorisation of primes above $2,3,5,7$ is easily obtained:
$$2\mathcal{O}_F=\mathfrak{p}_2^2, 3\mathcal{O}_F=\mathfrak{p}_3\mathfrak{p}_3', 5\mathcal{O}_F=\mathfrak{p}_5 \mathfrak{p}_5', 7\mathcal{O}_F=\mathfrak{p}_7\mathfrak{p}_7'.$$
As the class number of $\Q(\sqrt{79})$ is $3$, it follows that $\mathfrak{p}_2$ is principal. Next we see that
\begin{equation}\label{ep21}
\langle 4+\sqrt{79}\rangle=\mathfrak{p}_3^2 \mathfrak{p}_7
\end{equation}
and
\begin{equation}\label{ep22}
\langle 3+\sqrt{79}\rangle=\mathfrak{p}_2 \mathfrak{p}_5\mathfrak{p}_7.
\end{equation}
If the ideal $\mathfrak{p}_3$ is principal then from (\ref{ep21}) we find $\mathfrak{p}_7 $ is principal. Now from (\ref{ep22}) we obtain that $\mathfrak{p}_5$ is principal. Consequently, all the primes of norm less than the Minkowski constant are principal. This contradiction establishes the proposition.
\end{proof}

\begin{thm}
Let $u$ be a square-free integer such that $u \equiv 2 \pmod 3$. Let $F$ denote the compositum of $K_u=\Q(\sqrt{u})$ and $\Q(\sqrt{79})$. If class number of $F$ is prime then both $\mathbf{R}_{F/\Q}$ and $\mathbf{R}_{F/\Q(\sqrt{79})}$ are non-trivial.
\end{thm}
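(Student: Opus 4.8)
The plan is to mirror the argument structure of Theorem \ref{MT1} and Theorem \ref{MT31}, using $3$ as the splitting prime in $\Q(\sqrt{79})$ rather than a prime dictated by Proposition \ref{P3} or Proposition \ref{P5}. The key input is Proposition \ref{P2}, which hands us, essentially for free, a non-principal prime ideal $\mathfrak{p}_3$ above $3$ in $\Q(\sqrt{79})$. The condition $u \equiv 2 \pmod 3$ is precisely what is needed to control the splitting of $3$ in the quadratic field $K_u$, and hence (via the compositum structure) in $F$.

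Let me lay out the steps. First I would study the factorization of $3$ in the biquadratic field $F = \Q(\sqrt{u}, \sqrt{79})$. The three quadratic subfields are $\Q(\sqrt{u})$, $\Q(\sqrt{79})$, and $\Q(\sqrt{79u})$. Since $3$ splits in $\Q(\sqrt{79})$ (by Proposition \ref{P2}), and since $u \equiv 2 \pmod 3$ forces $3$ to be inert in $\Q(\sqrt{u})$ (because $u$ is a non-residue mod $3$; note $2$ is the non-residue), the decomposition group of a prime of $F$ above $3$ is nontrivial and cyclic of order $2$. Thus the residue degree of $3$ in $F/\Q$ is $f=2$, and I would identify this $f$ as the candidate element of $\mathbf{R}_{F/\Q}$.

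The heart of the argument is then to show that a prime ideal $\wp$ of $F$ above $3$ is non-principal, so that — the class number of $F$ being assumed prime — the single class $[\wp]$ generates $C\ell(F)$ and forces $f=2 \in \mathbf{R}_{F/\Q}$. Here I would run the norm argument exactly as in Theorem \ref{MT1}: if $\wp$ were principal, then taking the norm $N_{F/\Q(\sqrt{79})}$ would make $\mathfrak{p}_3^{f'}$ principal in $\Q(\sqrt{79})$ for the appropriate residue degree $f'=Res_{F/\Q(\sqrt{79})}\,\wp$, whence the order of $[\mathfrak{p}_3]$ in $C\ell(\Q(\sqrt{79}))$ would divide $f'$, a power of $2$. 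But $[\mathfrak{p}_3]$ has order $3$ by Proposition \ref{P2}, a contradiction. The same non-principality of $\wp$, viewed through the relative extension, simultaneously yields that $Res_{F/\Q(\sqrt{79})}\,\wp \in \mathbf{R}_{F/\Q(\sqrt{79})}$, since $\Q(\sqrt{79})$ has class number $3 \neq 1$ but the class group of $F$ is still generated by the single non-principal prime $\wp$.

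The main obstacle I anticipate is the bookkeeping of residue degrees in the relative extension $F/\Q(\sqrt{79})$ and verifying that the relevant residue degree there is genuinely larger than $1$, so that the conclusion about $\mathbf{R}_{F/\Q(\sqrt{79})}$ is non-trivial rather than vacuous. One must check that $3\mathcal{O}_{\Q(\sqrt{79})} = \mathfrak{p}_3 \mathfrak{p}_3'$ does \emph{not} split further in $F$ but rather that each $\mathfrak{p}_3$ becomes inert (residue degree $2$), which again follows from $u \equiv 2 \pmod 3$ governing the behavior of $3$ relative to $\sqrt{u}$. Once this splitting picture is pinned down, Theorem \ref{MT2} delivers the annihilators and the two non-triviality claims follow in parallel; the argument is otherwise a direct transcription of the quadratic-subfield method already established.
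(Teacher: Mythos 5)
Your proposal is correct and follows essentially the same route as the paper's own proof: identify the prime $3$ (split in $\Q(\sqrt{79})$ by quadratic reciprocity, inert in $K_u$ because $u \equiv 2 \pmod 3$), conclude that a prime $\wp$ of $F$ above $3$ has residue degree $2$ in both $F/\Q$ and $F/\Q(\sqrt{79})$, and then use the norm $N_{F/\Q(\sqrt{79})}\wp = \mathfrak{p}_3^2$ together with Proposition \ref{P2} and the odd class number $3$ of $\Q(\sqrt{79})$ to rule out principality of $\wp$, so that the prime class number of $F$ forces $2$ into both sets. The only cosmetic difference is that you spell out the decomposition-group bookkeeping that the paper compresses into one sentence.
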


\begin{proof}
Let $\wp$ denote a prime ideal of $F$ above $3$ and $\mathfrak{p}$ be the prime of $\Q(\sqrt{79})$ below $\wp$. We see that $3$ splits completely in $\Q(\sqrt{79})$ and remains inert in $K_u$. Consequently the residue degree of $\wp$ is $2$ for the extensions $F/\Q$ and $F/\Q(\sqrt{79})$. 

If $\wp$ is principal, then $$N_{F/\Q(\sqrt{79})} \wp= \mathfrak{p}^2$$ is also principal. As the class number of $\Q(\sqrt{79})$ is $3$, it follows that $\mathfrak{p}$ is principal. This contradicts the Proposition \ref{P2}. 

Thus $\wp$ is non-principal. Now the theorem follows from the assumption that the class number of $F$ is prime.
\end{proof}


We list these $u<1000$, for which the class number of $F$ is prime, in the following table.
\begin{center}
	\begin{table}[H]	
		\parbox[t]{0.69\linewidth}{
			\begin{tabular}[t]{|c|c|c|c||c|c|c|c|}
				\hline
				$u$ & $h_F$& $\mathbf{R}_{F/\Q}$ & $\mathbf{R}_{F/\Q(\sqrt{79})}$ & $u$ &$h_F$ &$\mathbf{R}_{F/\Q}$ & $\mathbf{R}_{F/\Q(\sqrt{79})}$ \\
				\hline
				2    &     3  & 1,2 &1,2 &                   5 & 3 &1,2 &1,2\\
				11    &    3  & 1,2 &1,2 &                  17 & 3 &1,2 &1,2\\
				23    &    3  & 1,2 &1,2 &                 29 & 3 &1,2 &1,2\\
				41    &    3  & 1,2 &1,2 &                 47 & 3 &1,2 &1,2\\
				53   &    3  & 1,2 &1,2 &                 59 & 3 &1,2 &1,2\\
				71   &    3  & 1,2 &1,2 &                 101 & 3 &1,2 &1,2\\
				107    &    3  & 1,2 &1,2 &                 113 & 3 &1,2 &1,2\\
				131    &    3  & 1,2 &1,2 &                 137 & 3 &1,2 &1,2\\
				149    &    3  & 1,2 &1,2 &                 158 & 3 &1,2 &1,2\\
				167    &    3  & 1,2 &1,2 &                 173 & 3 &1,2 &1,2\\
				197    &    3  & 1,2 &1,2 &                 227 & 3 &1,2 &1,2\\
				233    &    3  & 1,2 &1,2 &                 239 & 3 &1,2 &1,2\\
				251    &    3  & 1,2 &1,2 &                 263 & 3 &1,2 &1,2\\
				269    &    3  & 1,2 &1,2 &                 293 & 3 &1,2 &1,2\\	
				311   &    3  & 1,2 &1,2 &                 347 & 3 &1,2 &1,2\\
				383    &    3  & 1,2 &1,2 &                 395 & 3 &1,2 &1,2\\
				419    &    3  & 1,2 &1,2 &                 431 & 3 &1,2 &1,2\\
				449    &    3  & 1,2 &1,2 &                 461 & 3 &1,2 &1,2\\
				467    &    3  & 1,2 &1,2 &                 491 & 3 &1,2 &1,2\\
				503    &    3  & 1,2 &1,2 &                 509 & 3 &1,2 &1,2\\
				521    &    3  & 1,2 &1,2 &                 557 & 3 &1,2 &1,2\\	
				587    &    3  & 1,2 &1,2 &                 599 & 3 &1,2 &1,2\\
				647    &    3  & 1,2 &1,2 &                 677 & 3 &1,2 &1,2\\
				683    &    3  & 1,2 &1,2 &                 701 & 3 &1,2 &1,2\\
				719    &    3  & 1,2 &1,2 &                 743 & 3 &1,2 &1,2\\
				773 &    3  & 1,2 &1,2 &                 797 & 3 &1,2 &1,2\\
				821    &    3  & 1,2 &1,2 &                 827 & 3 &1,2 &1,2\\
				863    &    3  & 1,2 &1,2 &                 869 & 3 &1,2 &1,2\\
				887   &    3  & 1,2 &1,2 &                 911 & 3 &1,2 &1,2\\
				929    &    3  & 1,2 &1,2 &                 941& 3 &1,2 &1,2\\
				947    &    3  & 1,2 &1,2 &                 971 & 3 &1,2 &1,2\\
				983    &    3  & 1,2 &1,2 &                  &  & &\\
				
				\hline
			\end{tabular}
			
		}
	\end{table}
\end{center}

The annihilators constructed for these $f>1$ are all trivial, that is, the $G-trace$. But the main point here is to have non-trivial $\mathbf{R}_{F/\Q}$. In fact, if we can obtain infinitely many irreducible polynomials of the form $X^6-a$ such that, for $F=\Q(a^{1/6}, \zeta_6)$ and $K=\Q(\zeta_6)$, the set $\mathbf{R}_{F/K}$ is non-trivial. Then for the unique intermediate field $L$ of of degree $3$ of the extension $F/K$, the class number is trivial outside $2$.\\

We remark that all the families of extensions $L/K$ with non-trivial $\mathbf{R}_{L/K}$ obtained here is under the assumption that $h_L$ is prime. However, we can obtain annihilators similar to $\theta_f(S)$ for many more fields. We illustrate the simplest such construction. 

Suppose $L/K$ is an extension and $f>1$ is an element in $\mathbf{R}_{L/K}$. Let 
$$\theta_f(S)=\sigma_1+\ldots +\sigma_s$$ 
be an annihilator of class group $C\ell(L)$ of $L$. For any extension $E$ of $L$ such that $E/K$ is Galois, let $H=Gal(E/L)$ and put 
$$\theta=\sum_{i=1}^s\sum_{j=1}^r \sigma_i \tau_j.$$
Here $H=\{\tau_1, \ldots, \tau_r\}$ and we also use $\sigma_i$ for the lift of $\sigma_i$ in $Gal(E/K)$.  Then it is readily seen that $\theta$ is an annihilator of the class group $C\ell(E)$ of $E$.

We remark that for any extension $L/K$, the capitulation kernel is trivial, for any intermediate field, outside divisors of $[L:K]$ (see \cite{RS10}). Theorem \ref{MT4} gives that existence of $f \in \mathbf{R}_{L/K}$ implies that the class group of the intermediate field $K_f$ of degree $f$ is trivial outside divisors of $[L:K]$.

Lastly, we make the following hypothetical connection. Suppose there are infinitely many extensions $L/\Q$ such that \\
(i) $Gal(L/\Q)$ is cyclic, \\
(ii) the degree $[L:\Q]$ is square-free,\\
(iii) the extension $L/\Q$ is totally ramified,\\
(iv) the class number of $L$ is coprime to $[L:\Q]$,\\
(v) the set $\mathbf{R}_{L/\Q}$ is non-trivial.\\
Then there are infinitely many fields with class number one. Note that, one can easily obtain extensions satisfying conditions (i)-(iii), and even (iv).

\end{document}